\providecommand{\U}[1]{\protect\rule{.1in}{.1in}}
\providecommand{\U}[1]{\protect\rule{.1in}{.1in}}
\providecommand{\U}[1]{\protect\rule{.1in}{.1in}}
\newtheorem{theorem}{Theorem}[section]
\newtheorem{lemma}[theorem]{Lemma}
\newtheorem{proposition}[theorem]{Proposition}
\newtheorem{definition}[theorem]{Definition}
\newtheorem{corollary}[theorem]{Corollary}
\newtheorem{question}[theorem]{Question}
\newtheorem{remark}[theorem]{Remark}
\theoremstyle{definition}
\begin{document}
\title[Groups with frames of translates]{Groups with frames of translates}
\author{Hartmut F\"uhr, Vignon Oussa}
\address{Lehrstuhl A f\"ur Mathematik \\
RWTH Aachen University, D-52056 Aachen\\
Germany\\
Department of Mathematics \\
Bridgewater State University\\
Bridgewater, MA 02324 \\
U.S.A.}
\email{vignon.oussa@bridgew.edu}

\begin{abstract}
Let $G$ be a locally compact group with left regular representation
$\lambda_{G}.$ We say that $G$ admits a frame of translates if there exist a
countable set $\Gamma\subset G$ and $\varphi\in L^{2}(G)$ such that
$(\lambda_{G}(x) \varphi)_{x \in\Gamma}$ is a frame for $L^{2}(G).$ The
present work aims to characterize locally compact groups having frames of
translates, and to this end, we derive necessary and/or sufficient conditions
for the existence of such frames. Additionally, we exhibit surprisingly large
classes of Lie groups admitting frames of translates.

\end{abstract}
\maketitle




\section{Introduction}

Throughout this paper, $G$ denotes a second countable locally compact group
$G$ with Haar measure $\mu_{G}$. We denote the associated $L^{2}$-space by
$L^{2}(G)$. $G$ acts on this space unitarily via the left regular
representation, which we denote by $\lambda_{G}$.

We briefly recall the definitions of \textbf{frames} and \textbf{Bessel
sequence}: A system $(\eta_{i})_{i\in I}$ of vectors in a Hilbert space is
called a \textbf{frame} if there exist constants $0<A\leq B<\infty$ such that,
for all $f\in\mathcal{H}$,
\[
A\Vert f\Vert^{2}\leq\sum_{i\in I}|\langle f,\eta_{i}\rangle|^{2}\leq B\Vert
f\Vert^{2}~.
\]
The constants $A,B$ are called frame bounds. We refer to $A$ as a lower frame
bound and to $B$ as an upper frame bound. If $(\eta_{i})_{i\in I}$ only admits
an upper frame bound then we say that $(\eta_{i})_{i\in I}$ is a
\textbf{Bessel family}.

We are interested in groups of the following type:

\begin{definition}
$G$ \textbf{admits a frame of translates}, or \textbf{is an FT group} if there
exists a family $\Gamma\subset G$ and $\varphi\in L^{2}(G)$ such that the
family $(\lambda_{G}(x) \varphi)_{x \in\Gamma} \subset L^{2}(G)$ is a frame;
i.e., there exist constants $0 < A \le B < \infty$ such that, for all $g \in
L^{2}(G)$,
\[
A \| g \|_{2}^{2} \le\sum_{x \in\Gamma} |\langle g, \lambda_{G}(x)
\varphi\rangle|^{2} \le B \| g \|_{2}^{2}~.
\]

\end{definition}

The next remark lists some known or expected results. Part (b) is
\cite[Theorem 1.2]{ChDeHe}, and the proof given in \cite{ChDeHe} relies on
fairly technical concepts such as Beurling density.

\begin{remark}
\label{facts}\text{ }\newline(a) If $G$ is discrete, then $(\lambda
_{G}(x)\delta_{e})_{x\in G}$ is an orthonormal basis of $L^{2}(G)=\ell^{2}
(G)$, where $\delta_{e}$ denotes the Kronecker delta at the neutral element.
Hence $G$ is an FT-group.\newline(b) $G=\mathbb{R}^{d}$ is not an $FT$ group.
\end{remark}

The main objective of the present work is to investigate solutions to the
following question

\begin{question}
\label{quest} Which non-discrete locally compact groups possess the FT property?
\end{question}

Intuitively, one may not expect many positive answers to Question \ref{quest}
outside the discrete case. However, our results will show that FT groups are
surprisingly not rare.

\begin{remark}
Instead of using subsets $\Gamma\subset G$, one may pose the central question
of our paper with reference to families $(x_{i})_{i\in I}\subset G$ and
associated families of vectors $(\lambda_{G}(x_{i})\varphi)_{i\in I}$. The
difference between the two is that the latter allows repetitions of elements.
However, it is easy to see that $(\lambda_{G}(x_{i})\varphi)_{i\in I}$ is a
frame if and only if $(\lambda_{G}(x)\varphi)_{x\in\Gamma}$ is a frame, where
$\Gamma=\{x_{i}:i\in I\},$ and additionally, $\sup_{y\in\Gamma}\sharp\{i\in
I:y=x_{i}\}<\infty.$ Hence exchanging subsets for families does not affect the
FT property, and we will freely switch between the two notions.
\end{remark}

In the course of this paper, we will freely use notions from frame theory,
representation theory of locally compact groups and Lie theory; our primary
references for these topics are \cite{MR1946982}, \cite{MR1397028}, and
\cite{Hilgert} respectively.

\section{Necessary criteria}

In this section, we will consider various necessary conditions for frames of
the type $(\lambda_{G}(x)\varphi)_{x\in\Gamma}$. These conditions will either
concern the family $\Gamma$ of shifts, the function $\varphi$, or the group.
In our analysis, we proceed precisely in this order.

\begin{definition}
Let $\Gamma$ be a subset of $G$. We say that $\Gamma$ is \textbf{(left)
relatively separated} if $\sup_{x\in G}\sharp\left(  \Gamma\cap xU\right)
<\infty$ for some and hence all relatively compact neighborhoods $U$ of the
identity. Next, we say that $\Gamma$ is $V$-\textbf{separated} if for some
relatively compact neighborhood $V$ of the identity, the family $(xV)_{x
\in\Gamma}$ consists of pairwise disjoint sets. $\Gamma$ is called
\textbf{separated} if it is $V$-separated for some suitable $V$.
\end{definition}

The following result has been rediscovered several times in frame theory. We
rephrase it for our setting.

\begin{lemma}
\label{separated} If $\left(  \lambda\left(  \gamma\right)  \varphi\right)
_{\gamma\in\Gamma}$ is a Bessel family then $\Gamma$ must be relatively separated.
\end{lemma}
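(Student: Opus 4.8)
The plan is to
show that if $\Gamma$ is not relatively separated, then the upper frame bound
(Bessel bound) must fail. The strategy is to construct, for any candidate
Bessel bound $B$, a test function $g\in L^{2}(G)$ for which the frame sum
$\sum_{\gamma\in\Gamma}|\langle g,\lambda_{G}(\gamma)\varphi\rangle|^{2}$
exceeds $B\|g\|_{2}^{2}$. The key observation is that the inner product
$\langle g,\lambda_{G}(\gamma)\varphi\rangle$ equals
$\int_{G} g(x)\,\overline{\varphi(\gamma^{-1}x)}\,d\mu_{G}(x)$, which is a
correlation between $g$ and the $\gamma$-shift of $\varphi$; shifts $\gamma$
that are clustered tightly together produce nearly identical (and hence
constructively adding) contributions when $g$ is chosen to resemble a shift of
$\varphi$.

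First I would fix a relatively compact neighborhood $U$ of the identity and
normalize $\varphi$. By assumption (the negation of relative separation), for
every $N$ there exists a point $x_{N}\in G$ with
$\sharp(\Gamma\cap x_{N}U)\ge N$; that is, the cluster $\Gamma_{N}
=\Gamma\cap x_{N}U$ contains at least $N$ shifts confined to a single
left-translate of the small set $U$. Second, I would use the strong continuity
of $\lambda_{G}$ to control how much $\lambda_{G}(\gamma)\varphi$ can vary as
$\gamma$ ranges over such a small cluster: given $\varepsilon>0$, we may shrink
$U$ so that $\|\lambda_{G}(u)\varphi-\varphi\|_{2}<\varepsilon$ for all
$u\in U$, hence for $\gamma\in\Gamma_{N}$ the vectors
$\lambda_{G}(\gamma)\varphi = \lambda_{G}(x_{N})\lambda_{G}(x_{N}^{-1}\gamma)
\varphi$ are all within $\varepsilon$ of the common vector
$\lambda_{G}(x_{N})\varphi$. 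Third, I would take the test function to be that
common vector, $g=\lambda_{G}(x_{N})\varphi$, which has fixed norm
$\|\varphi\|_{2}$; then each of the $\ge N$ terms indexed by
$\gamma\in\Gamma_{N}$ satisfies
$|\langle g,\lambda_{G}(\gamma)\varphi\rangle|\ge \|\varphi\|_{2}^{2}
-\varepsilon\|\varphi\|_{2}$, so the single cluster already contributes at
least $N\bigl(\|\varphi\|_{2}^{2}-\varepsilon\|\varphi\|_{2}\bigr)^{2}$ to the
frame sum while $\|g\|_{2}^{2}=\|\varphi\|_{2}^{2}$ stays bounded. Letting
$N\to\infty$ forces the Bessel ratio to blow up, contradicting the existence of
any finite upper bound $B$.

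The step I expect to be the main obstacle is the second one: making the
continuity estimate uniform over the whole group. Strong continuity of the
left regular representation gives $\|\lambda_{G}(u)\varphi-\varphi\|_{2}
\to 0$ as $u\to e$, so $U$ can be chosen once and for all to make this norm
less than $\varepsilon$ for $u\in U$; the subtlety is that I then apply the
\emph{isometry} $\lambda_{G}(x_{N})$, which preserves the norm difference
exactly, so the bound $\|\lambda_{G}(\gamma)\varphi-\lambda_{G}(x_{N})\varphi
\|_{2}=\|\lambda_{G}(x_{N}^{-1}\gamma)\varphi-\varphi\|_{2}<\varepsilon$ holds
uniformly in $x_{N}$. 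This is precisely why left-translation invariance of the
construction rescues uniformity and the argument goes through cleanly. A minor
technical point is choosing $\varepsilon$ small enough (say
$\varepsilon<\|\varphi\|_{2}/2$) so that the lower bound
$\|\varphi\|_{2}^{2}-\varepsilon\|\varphi\|_{2}$ on each inner product modulus
is strictly positive; with $\varphi\ne0$ this is immediate, and one may assume
$\varphi\ne0$ since otherwise the family is trivially not a frame.
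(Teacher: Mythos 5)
Your proof is correct and takes essentially the same approach as the paper's: both arguments negate relative separation to obtain $N$-point clusters $\Gamma\cap x_{N}U$, use the translated window $g=\lambda_{G}(x_{N})\varphi$ as the test vector, and exploit left-invariance (the isometry of $\lambda_{G}(x_{N})$) to bound each of the $\geq N$ inner products from below by a positive constant independent of $N$, so that the Bessel sum blows up against the fixed norm $\|g\|_{2}=\|\varphi\|_{2}$. The only cosmetic difference is that the paper gets the uniform lower bound directly from continuity of the matrix coefficient $x\mapsto\langle\varphi,\lambda_{G}(x)\varphi\rangle$, whereas you derive it from strong continuity of $\lambda_{G}$ together with Cauchy--Schwarz.
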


\begin{proof}
Suppose that $\Gamma$ is not relatively separated. We consider the function
$x\mapsto\left\langle \varphi,\lambda\left(  x\right)  \varphi\right\rangle $
defined over $G.$ Since $x\mapsto\left\langle \varphi,\lambda\left(  x\right)
\varphi\right\rangle $ is continuous, there exists an open set $V$ around the
identity element such that $\inf\left\{  \left\vert \left\langle
\varphi,\lambda\left(  x\right)  \varphi\right\rangle \right\vert :x\in
V\right\}  =\mu>0.$ Next, for an arbitrary natural number $N,$ there exists
$y\in G$ such that $yV$ contains at least $N$ elements from $\Gamma.$ Next,
let $\Gamma_{N}=yV\cap\Gamma.$ As a result,
\begin{align*}
\sum_{\gamma\in\Gamma}\left\vert \left\langle \lambda\left(  y\right)
\varphi,\lambda\left(  \gamma\right)  \varphi\right\rangle \right\vert ^{2}
&  \geq\sum_{\gamma\in\Gamma_{N}}\left\vert \left\langle \lambda\left(
y\right)  \varphi,\lambda\left(  \gamma\right)  \varphi\right\rangle
\right\vert ^{2}\\
&  =\sum_{\gamma\in\Gamma_{N}}\left\vert \left\langle \varphi,\lambda\left(
y^{-1}\gamma\right)  \varphi\right\rangle \right\vert ^{2}\\
&  =\sum_{y\alpha\in\Gamma_{N}}\left\vert \left\langle \varphi,\lambda\left(
y^{-1}y\alpha\right)  \varphi\right\rangle \right\vert ^{2}\\
&  =\sum_{\alpha\in y^{-1}\Gamma_{N}}\left\vert \left\langle \varphi
,\lambda\left(  \alpha\right)  \varphi\right\rangle \right\vert ^{2}\\
&  \geq\sharp\left(  y^{-1}\Gamma_{N}\right)  \cdot\mu^{2}\\
&  \geq N\cdot\mu^{2}=\left(  \frac{N\cdot\mu^{2}}{\left\Vert \varphi
\right\Vert ^{2}}\right)  \left\Vert \lambda\left(  y\right)  \varphi
\right\Vert ^{2}.
\end{align*}
Now since $N$ is arbitrary, it follows that $\left(  \lambda\left(
\gamma\right)  \varphi\right)  _{\gamma\in\Gamma}$ is not a Bessel sequence.
\end{proof}

The following result is formulated in \cite[Lemma 3.3]{MR1021139}, with proof
attributed to \cite{MR809337}. Since we need it in the following, and the
argument in \cite{MR809337} is given for the slightly different context of
admissible coverings, we include a short proof.

\begin{lemma}
\label{uniform} Let $\Gamma\subset G$ denote a relatively separated set of a
locally compact group $G$. Then $\Gamma$ is the finite union of separated sets.
\end{lemma}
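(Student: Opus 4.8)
The plan is to build a \emph{conflict graph} on the points of $\Gamma$ whose chromatic number is finite, and to read off the desired decomposition from a proper colouring: each colour class will be a separated set, and there will be only finitely many of them.

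First I would fix a relatively compact neighborhood $V$ of the identity and set $W=VV^{-1}$, which is again a relatively compact neighborhood of the identity and is symmetric, $W^{-1}=W$. The elementary but crucial observation is that for $\gamma,\gamma'\in\Gamma$ the translates $\gamma V$ and $\gamma' V$ meet if and only if $\gamma^{-1}\gamma'\in W$, i.e. $\gamma'\in\gamma W$; indeed $\gamma v_1=\gamma' v_2$ with $v_1,v_2\in V$ is the same as $\gamma^{-1}\gamma'=v_1v_2^{-1}\in VV^{-1}$, and this relation is symmetric in $\gamma,\gamma'$ because $W=W^{-1}$. I would therefore define a graph on the vertex set $\Gamma$ by joining $\gamma$ and $\gamma'$ by an edge precisely when $\gamma\neq\gamma'$ and $\gamma'\in\gamma W$. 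By construction, a subset $S\subseteq\Gamma$ is an independent set of this graph exactly when the family $(xV)_{x\in S}$ is pairwise disjoint, that is, exactly when $S$ is $V$-separated, hence separated.

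Next I would bound the vertex degrees. The neighbours of a fixed $\gamma$ are precisely the points of $\Gamma\cap\gamma W$ other than $\gamma$ itself, so the degree of $\gamma$ is at most $\sharp(\Gamma\cap\gamma W)-1$. Since $W$ is a relatively compact neighborhood of the identity and $\Gamma$ is relatively separated, the quantity $C:=\sup_{x\in G}\sharp(\Gamma\cap xW)$ is finite; hence every vertex has degree at most $C-1$.

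Finally I would colour the graph. Because $G$ is second countable, hence $\sigma$-compact, and $\Gamma$ is relatively separated, $\Gamma$ meets every compact set in finitely many points and is therefore countable; enumerating $\Gamma=\{\gamma_1,\gamma_2,\dots\}$ and assigning to $\gamma_n$ the least element of $\{1,\dots,C\}$ not already used on its neighbours among $\gamma_1,\dots,\gamma_{n-1}$ yields a proper colouring with at most $C$ colours, since each vertex has at most $C-1$ earlier neighbours and thus always leaves a colour available. The $C$ colour classes are independent sets, hence separated, and their union is $\Gamma$, which gives the claim. The only real subtlety is this colouring step for the (possibly) infinite set $\Gamma$: countability reduces it to the standard greedy argument, but should one wish to avoid invoking second countability, one could instead appeal to the De Bruijn--Erd\H{o}s theorem, since every finite subgraph is $C$-colourable by the degree bound and therefore so is the whole graph.
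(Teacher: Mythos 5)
Your proof is correct, and it takes a recognizably different route from the paper's, although both rest on the same opening observation: relative separatedness of $\Gamma$ with respect to the enlarged neighborhood ($VV^{-1}$ in your notation, $V^{2}$ with $V$ symmetric in the paper) bounds the number of points of $\Gamma$ whose $V$-translates can meet any fixed translate $\gamma V$. From there the two arguments diverge. The paper uses Zorn's Lemma to extract a maximal $V$-separated subset $\Gamma_{1}\subset\Gamma$, then iterates on the complement, and shows by a pigeonhole argument that the process must stop after at most $m+1$ rounds: a leftover point would have to conflict with some element of each of the $m+1$ pairwise disjoint classes, contradicting the bound $m$. You instead make the combinatorial content explicit as a bounded-degree graph colouring: the conflict graph has maximum degree at most $C-1$, so a greedy colouring with $C$ colours succeeds, and the colour classes are exactly the $V$-separated pieces. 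Your flagged subtlety is the real point of difference: greedy colouring needs a well-ordering of $\Gamma$ of order type $\le\omega$, which you correctly obtain from countability of $\Gamma$ (relatively separated sets meet compacta finitely, and $G$ is second countable by the paper's standing assumption), with De Bruijn--Erd\H{o}s as the choice-based fallback. The paper's maximal-subset argument sidesteps this entirely: it works for $\Gamma$ of arbitrary cardinality with no appeal to second countability, at the cost of invoking Zorn up front (so neither proof is choice-free in full generality). The bounds on the number of pieces are comparable, roughly the maximal conflict count plus one, in both cases.
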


\begin{proof}
Fix a relatively compact and symmetric neighborhood $V \subset G$ of the
identity. Then relative separatedness of $\Gamma$ yields that
\[
\sup_{\gamma\in\Gamma} \sharp\{ \gamma^{\prime}\in\Gamma: \gamma^{\prime}V
\cap\gamma V \not = \emptyset\} \le\sup_{x \in G} \sharp\{ \gamma^{\prime}%
\in\Gamma: \gamma^{\prime} \in x V^{2} \} = m < \infty~.
\]
Zorn's Lemma allows to choose a subset $\Gamma_{1} \subset\Gamma$ that is
$V$-discrete and maximal with respect to inclusion. If $\Gamma_{1} = \Gamma$,
then $\Gamma$ itself is $V$-discrete. We continue this procedure of choosing a
maximal $V$-discrete $\Gamma_{s+1} \subset\Gamma\setminus\bigcup_{j \le s}
\Gamma_{j}$, as long as the complement is nonempty. We claim that this
procedure breaks off after at most $m+1$ steps. Assuming that $\gamma_{0}
\in\Gamma\setminus\bigcup_{j \le m+1} \Gamma_{j}$, we find for every fixed $1
\le j \le m+1$ that $\gamma_{0} \in\Gamma\setminus\bigcup_{i<j} \Gamma_{i}$,
and by maximality of $\Gamma_{j}$, $\Gamma_{j} \cup\{ \gamma_{0} \}$ is not
$V$-discrete. Hence there exists $\gamma_{j} \in\Gamma_{j}$ such that
$\gamma_{0} V \cap\gamma_{j} V \not = \emptyset$. Since the $\Gamma_{j}$ are
pairwise disjoint, this entails
\[
\sharp\{ \gamma^{\prime}\in\Gamma: \gamma^{\prime}V \cap\gamma_{0} V \not =
\emptyset\} \ge m+1 ~,
\]
contrary to our choice of $m$.
\end{proof}

We next derive necessary conditions on the function $\varphi$ giving rise to
frames of translates. Our aim is to show that for non-discrete groups such
functions must necessarily be somewhat pathological. For instance, bounded
functions with compact support will not do. In fact, we will be able to
exclude a substantially larger space of functions, namely a particular
\textbf{Wiener amalgam space}.

We define a local maximum function, as follows: Fix a compact neighborhood $U
$ of the identity. Given a measurable function $f$ on $G$, we define
\[
f_{U}^{\sharp}(x)=\mathrm{ess\,sup}_{y\in xU}|f(y)|.
\]
Next, given $1\leq p\leq\infty$, we define the Wiener amalgam spaces
$W(L^{\infty},L^{p})$ as the space of Borel functions $f$ for which the
respective norm
\[
\Vert f\Vert_{W(L^{\infty},L^{p})}=\Vert f_{U}^{\sharp}\Vert_{p}%
\]
is finite. It is well-known that, up to equivalence, the Wiener amalgam norm
does not depend on the choice of $U$.

Note that compactly supported, bounded functions $\varphi$ are contained in
$W(L^{\infty},L^{p})$, for all $1\leq p\leq\infty$. For the following result,
we also need the following. The convolution of two functions $f,\varphi$ on
$G$ is defined as the integral
\[
\left(  f\ast\varphi\right)  \left(  x\right)  =\int_{G}f\left(  y\right)
\varphi\left(  y^{-1}x\right)  dy
\]
and $\varphi^{\ast}(x)=\overline{\varphi(x^{-1})}$. Then a straightforward
calculation gives $\left(  f\ast\varphi^{\ast}\right)  \left(  x\right)
=\left\langle f,\lambda_{G}\left(  x\right)  \varphi\right\rangle ,$ for all
$f, \varphi\in L^{2}(G)$.

Now the next proposition excludes compactly supported bounded functions from
frame generation.

\begin{proposition}
\label{prop:phi_path} Let $G$ be non-discrete. Let $\varphi\in L^{2}(G)$ be
such that $\varphi^{\ast}\in W(L^{\infty},L^{2})$. Then there does not exist a
discrete set $\Gamma\subset G$ such that $(\lambda_{G}(x)\varphi)_{x\in\Gamma
})$ is a frame of $L^{2}(G)$.
\end{proposition}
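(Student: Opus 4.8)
The plan is to argue by contradiction: assuming a frame $(\lambda_{G}(x)\varphi)_{x\in\Gamma}$ existed, I would construct unit vectors $g_{n}\in L^{2}(G)$ whose frame coefficients tend to $0$ in $\ell^{2}(\Gamma)$, contradicting the lower frame bound $A>0$. The bridge to the hypothesis on $\varphi^{\ast}$ is the identity $\langle g,\lambda_{G}(x)\varphi\rangle=(g\ast\varphi^{\ast})(x)$ recorded just before the statement, which lets me view the coefficient family as the restriction to $\Gamma$ of the continuous function $h:=g\ast\varphi^{\ast}$. The lower frame inequality then reads $A\|g\|_{2}^{2}\le\sum_{\gamma\in\Gamma}|h(\gamma)|^{2}$, so everything reduces to bounding a sampled $\ell^{2}$-sum of $h$ by a norm of $g$ that I can make arbitrarily small.

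The first ingredient is a \emph{sampling estimate}. Since a frame is in particular a Bessel family, Lemma \ref{separated} forces $\Gamma$ to be relatively separated, and Lemma \ref{uniform} writes $\Gamma=\bigcup_{j=1}^{N}\Gamma_{j}$ as a finite union of $V_{j}$-separated sets, where I may take each $V_{j}$ symmetric and relatively compact. For a single $V$-separated set and $y\in\gamma V$, symmetry gives $\gamma\in yV$, hence $|h(\gamma)|\le h_{V}^{\sharp}(y)$; integrating this over the pairwise disjoint sets $\gamma V$, each of Haar measure $\mu_{G}(V)$, yields $\sum_{\gamma}|h(\gamma)|^{2}\le\mu_{G}(V)^{-1}\|h\|_{W(L^{\infty},L^{2})}^{2}$. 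Summing the $N$ pieces and invoking the equivalence of amalgam norms for different $U$ produces a constant $C=C(\Gamma)$ with $\sum_{\gamma\in\Gamma}|h(\gamma)|^{2}\le C\,\|g\ast\varphi^{\ast}\|_{W(L^{\infty},L^{2})}^{2}$.

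The second ingredient is a \emph{convolution estimate} that turns the amalgam norm of $g\ast\varphi^{\ast}$ into the $L^{1}$-norm of $g$. Starting from $|(g\ast\varphi^{\ast})(z)|\le\int_{G}|g(y)|\,|\varphi^{\ast}(y^{-1}z)|\,dy$ and pulling the supremum over $z\in xU$ inside the integral, I obtain the pointwise domination $(g\ast\varphi^{\ast})_{U}^{\sharp}\le|g|\ast(\varphi^{\ast})_{U}^{\sharp}$; Young's inequality $L^{1}\ast L^{2}\to L^{2}$ then gives $\|g\ast\varphi^{\ast}\|_{W(L^{\infty},L^{2})}\le\|g\|_{1}\,\|\varphi^{\ast}\|_{W(L^{\infty},L^{2})}$. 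This is precisely where the hypothesis enters, producing the finite constant $M:=\|\varphi^{\ast}\|_{W(L^{\infty},L^{2})}$. Finally, because $G$ is non-discrete its Haar measure is non-atomic, so I can pick relatively compact neighborhoods $U_{n}$ of $e$ with $\mu_{G}(U_{n})\to0$ and set $g_{n}=\mu_{G}(U_{n})^{-1/2}\mathbf{1}_{U_{n}}$, whence $\|g_{n}\|_{2}=1$ while $\|g_{n}\|_{1}=\mu_{G}(U_{n})^{1/2}\to0$. Chaining the three estimates gives $A=A\|g_{n}\|_{2}^{2}\le\sum_{\gamma\in\Gamma}|(g_{n}\ast\varphi^{\ast})(\gamma)|^{2}\le CM^{2}\|g_{n}\|_{1}^{2}=CM^{2}\mu_{G}(U_{n})\to0$, which is impossible for a positive lower bound.

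The conceptual crux, and the step I expect to matter most, is the reduction itself: recognizing that on a non-discrete group one can force $\|g_{n}\|_{1}\to0$ while keeping $\|g_{n}\|_{2}=1$ — something impossible on a discrete group, where $\|\cdot\|_{1}\ge\|\cdot\|_{2}$ — and that the amalgam hypothesis on $\varphi^{\ast}$ converts this $L^{1}$-smallness into smallness of the sampled coefficients. The remaining work is the careful bookkeeping in the two amalgam inequalities (choosing symmetric $V_{j}$ and tracking the finite-union constant for the sampling bound, and the pointwise domination for the convolution bound), but neither of these is deep.
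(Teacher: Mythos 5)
Your proof is correct and follows essentially the same route as the paper's: the same reduction via Lemmas \ref{separated} and \ref{uniform} to a finite union of separated sets, the same sampling bound of $\sum_{\gamma}|(g\ast\varphi^{\ast})(\gamma)|^{2}$ by the amalgam norm, the same pointwise domination $(g\ast\varphi^{\ast})_{U}^{\sharp}\le|g|\ast(\varphi^{\ast})_{U}^{\sharp}$ followed by Young's inequality, and the same final contradiction via indicators of neighborhoods of shrinking Haar measure. The only cosmetic differences (your $L^{2}$-normalization of the test functions, and allowing distinct separation neighborhoods $V_{j}$ resolved by amalgam-norm equivalence) do not change the argument.
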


\begin{proof}
Suppose by contradiction that $(\lambda_{G}(x)\varphi)_{x\in\Gamma})$ is a
frame with lower frame bound $A.$ Then, for all $f\in L^{2}(G)$, we have
\begin{equation}
\Vert f\Vert_{2}^{2}\leq A^{-1}\sum_{x\in\Gamma}|f\ast\varphi^{\ast}(x)|^{2}~.
\label{lb}%
\end{equation}
By Lemmas \ref{separated} and \ref{uniform}, we can write $\Gamma
=\bigcup_{i=1}^{n}\Gamma_{i}$ disjointly, and each $\Gamma_{i}$ is
$U$-discrete, for a suitable symmetric neighborhood $U$ of the identity. Hence
we get
\begin{align*}
\sum_{x\in\Gamma}|f\ast\varphi^{\ast}(x)|^{2}  &  =\sum_{i=1}^{n}\sum
_{x\in\Gamma_{i}}|f\ast\varphi^{\ast}(x)|^{2}\\
&  =\sum_{i=1}^{n}\sum_{x\in\Gamma_{i}}\frac{1}{|U|}\int_{xU}|f\ast
\varphi^{\ast}(x)|^{2}dy\\
&  \leq\sum_{i=1}^{n}\sum_{x\in\Gamma_{i}}\frac{1}{|U|}\int_{xU}\left\vert
\left(  f\ast\varphi^{\ast}\right)  _{U}^{\sharp}(y)\right\vert ^{2}dy\\
&  \leq\frac{n}{|U|}\Vert\left(  f\ast\varphi^{\ast}\right)  _{U}^{\sharp
}\Vert_{2}^{2}~.
\end{align*}
Applying the well-known pointwise estimate
\[
\left(  f\ast g\right)  _{U}^{\sharp}(y)\leq(|f|\ast g_{U}^{\sharp})(y)~,
\]
valid for arbitrary measurable functions $f,g$ yields
\[
\Vert\left(  f\ast\varphi^{\ast}\right)  _{U}^{\sharp}\Vert_{2}\leq\left\Vert
|f|\ast(\varphi^{\ast})_{U}^{\sharp}\right\Vert _{2}.
\]
Coming back to (\ref{lb}), we thus obtain%
\[
\Vert f\Vert_{2}\leq\left(  \frac{n}{A|U|}\right)  ^{1/2}\left\Vert
|f|\ast(\varphi^{\ast})_{U}^{\sharp}\right\Vert _{2}.
\]

On the other hand, Young's inequality yields for all $f \in L^{1}(G) \cap
L^{2}(G)$, that
\[
\left\|  | f | \ast(\varphi^{*})_{U}^{\sharp}\right\|  _{2} \le\| f \|_{1} \|
(\varphi^{*})_{U}^{\sharp}\|_{2} = \| f \|_{1} \|\varphi^{*} \|_{W(L^{\infty},
L^{2})}~,
\]
and the Wiener amalgam norm is finite by assumption.

In summary, we have shown
\begin{equation}
\Vert f\Vert_{2}\leq\left(  \frac{n}{A|U|}\right)  ^{1/2}\Vert\varphi^{\ast
}\Vert_{W(L^{\infty},L^{2})}\Vert f\Vert_{1}~, \label{estimate}%
\end{equation}
for all $f\in L^{1}(G)\cap L^{2}(G)$. Now replacing $f$ with the indicator
function of $U$ with $\lambda_{G}(U)\rightarrow0$, yields the desired contradiction.
\end{proof}

We next derive various classes of groups that are not FT.

\begin{corollary}
\label{compact_case}If $G$ is compact, then it is FT if and only if it is finite.
\end{corollary}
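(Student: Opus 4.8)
The plan is to prove the two implications separately. The direction ``finite $\Rightarrow$ FT'' is immediate: a finite group is discrete, so Remark \ref{facts}(a) applies and even furnishes an orthonormal basis of translates. So all the content lies in the converse, for which I would argue by contraposition in the form ``$G$ compact and FT $\Rightarrow$ $G$ finite''.

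Assume $G$ is compact and that some $\Gamma\subset G$ and $\varphi\in L^2(G)$ make $(\lambda_G(x)\varphi)_{x\in\Gamma}$ a frame. First I would note that a frame is in particular a Bessel family, so Lemma \ref{separated} forces $\Gamma$ to be relatively separated. The first key step is then the observation that in a compact group every relatively separated set is \emph{finite}: picking a compact neighborhood $U$ of the identity realizing the bound $\sup_{x\in G}\sharp(\Gamma\cap xU)=C<\infty$, compactness yields a finite subcover $G=\bigcup_{i=1}^{k}x_iU$, whence $\sharp\Gamma\le\sum_{i=1}^{k}\sharp(\Gamma\cap x_iU)\le kC<\infty$.

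The second step exploits that a finite family can only frame a finite-dimensional space. Since $\Gamma$ is finite, the vectors $(\lambda_G(x)\varphi)_{x\in\Gamma}$ span a closed (finite-dimensional) subspace of $L^2(G)$; but the positive lower frame bound forces a frame to be complete, so this span must be all of $L^2(G)$, giving $\dim L^2(G)\le\sharp\Gamma<\infty$. Finally I would invoke the standard fact that for a compact group $\dim L^2(G)<\infty$ forces $G$ to be finite, and conclude.

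The only step needing genuine care rather than being purely formal is this last one, i.e.\ ruling out a finite-dimensional $L^2$ for an infinite compact group. The cleanest justification uses that a topological group is either discrete or has no isolated points; a compact discrete group is finite, while a non-discrete (hence perfect) compact group admits infinitely many pairwise disjoint nonempty open sets, whose indicator functions are nonzero and orthogonal in $L^2(G)$ since Haar measure is strictly positive on nonempty open sets, producing infinitely many linearly independent elements. (Alternatively one may cite Peter--Weyl.) I would emphasize that this argument applies to \emph{arbitrary} $\varphi\in L^2(G)$ and therefore does not require the regularity hypothesis $\varphi^{\ast}\in W(L^\infty,L^2)$ needed in Proposition \ref{prop:phi_path}; the compactness and the completeness forced by the lower frame bound do all the work.
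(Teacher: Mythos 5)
Your proof is correct and follows essentially the same route as the paper's one-line argument: Lemma \ref{separated} forces $\Gamma$ to be relatively separated, relatively separated subsets of a compact group are finite, and a finite frame forces $L^{2}(G)$ to be finite-dimensional, hence $G$ finite. You merely fill in the details the paper leaves implicit (the covering argument for finiteness of $\Gamma$, completeness of frames, and the perfect-space/Peter--Weyl step showing an infinite compact group has infinite-dimensional $L^{2}$), all of which are sound.
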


\begin{proof}
By Lemma \ref{separated} and the fact that relatively separated subsets of
compact groups are finite, $L^{2}(G)$ is finite-dimensional whenever $G$ is a
compact FT group.
\end{proof}

\begin{theorem}
\label{hom} Let $G$ be non-discrete, satisfying the following property: The
inverse of any subset of $G$ which is relatively separated is also relatively
separated. Then $G$ is not an FT group.
\end{theorem}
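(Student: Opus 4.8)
The plan is to argue by contradiction, probing the \emph{lower} frame bound with a shrinking family of indicator functions and showing that the resulting lower estimate is forced down to zero. Suppose $(\lambda_G(\gamma)\varphi)_{\gamma\in\Gamma}$ is a frame with lower bound $A>0$. Being in particular Bessel, Lemma \ref{separated} gives that $\Gamma$ is relatively separated, and the hypothesis on $G$ then gives that $\Gamma^{-1}$ is relatively separated as well. Fix a relatively compact symmetric neighborhood $U$ of the identity. The single consequence I want to extract is an overlap bound: since the map $\gamma\mapsto\gamma^{-1}$ identifies $\Gamma\cap Ux$ with $\Gamma^{-1}\cap x^{-1}U$, relative separatedness of $\Gamma^{-1}$ is exactly what yields $m:=\sup_{x\in G}\#(\Gamma\cap Ux)<\infty$. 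This passage from one side to the other is the only place the group hypothesis enters.

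For a relatively compact symmetric neighborhood $U_\varepsilon\subset U$ of the identity, I would take $f=\mathbf 1_{U_\varepsilon}$, so that $\|f\|_2^2=|U_\varepsilon|$. Using $\langle f,\lambda_G(x)\varphi\rangle=(f\ast\varphi^\ast)(x)$ and Cauchy--Schwarz,
\[
|(f\ast\varphi^\ast)(\gamma)|^2=\Big|\int_{U_\varepsilon}\varphi^\ast(y^{-1}\gamma)\,dy\Big|^2\le |U_\varepsilon|\int_{U_\varepsilon}|\varphi^\ast(y^{-1}\gamma)|^2\,dy~.
\]
Summing over $\gamma\in\Gamma$ and inserting the frame lower bound $A|U_\varepsilon|\le\sum_{\gamma}|(f\ast\varphi^\ast)(\gamma)|^2$, the factor $|U_\varepsilon|$ cancels and I am left with the clean inequality $A\le S_\varepsilon$, where $S_\varepsilon:=\int_{U_\varepsilon}\sum_{\gamma\in\Gamma}|\varphi^\ast(y^{-1}\gamma)|^2\,dy$. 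It then suffices to show $S_\varepsilon\to 0$ as $|U_\varepsilon|\to 0$, which is possible precisely because $G$ is non-discrete; this will contradict $A>0$.

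To control $S_\varepsilon$ I would apply Tonelli to write $S_\varepsilon=\sum_\gamma\int_{U_\varepsilon}|\varphi^\ast(y^{-1}\gamma)|^2\,dy$ and then change variables in each inner integral via the inversion and right-translation formulas for Haar measure, obtaining $\int_{U_\varepsilon}|\varphi^\ast(y^{-1}\gamma)|^2\,dy\le C_U\,\Delta(\gamma)^{-1}\int_{U_\varepsilon\gamma}|\varphi^\ast(z)|^2\,dz$, with $C_U=\sup_{y\in U}\Delta(y)^{-1}<\infty$. For $z\in U\gamma$ one has $\gamma\in Uz$, whence $\Delta(\gamma)^{-1}\le C_U\,\Delta(z)^{-1}$, while the overlap bound from the first paragraph gives $\sum_\gamma\mathbf 1_{U\gamma}(z)\le m$. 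Collecting these, $S_\varepsilon\le C_U^{2}m\int_{V_\varepsilon}|\varphi^\ast(z)|^2\Delta(z)^{-1}\,dz$ where $V_\varepsilon=\bigcup_{\gamma}U_\varepsilon\gamma$ shrinks as $\varepsilon\to 0$. The key point is that $\int_G|\varphi^\ast(z)|^2\Delta(z)^{-1}\,dz=\|\varphi\|_2^2<\infty$ by the inversion formula, so for a fixed reference $\varepsilon_0$ the integrand is genuinely in $L^1$; dominated convergence (the sets $V_\varepsilon$ shrinking to a null set) then forces $S_\varepsilon\to 0$.

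I expect the main obstacle to be the non-unimodular bookkeeping in the last paragraph: the change of variables produces the weights $\Delta(\gamma)^{-1}$, which are \emph{unbounded} over $\Gamma$, and a priori $\varphi^\ast$ need not even lie in $L^2$. The resolution, and the reason the argument works in full generality, is that these modular factors recombine exactly: the unbounded $\Delta(\gamma)^{-1}$ is absorbed against $\Delta(z)^{-1}$ on the support $U\gamma$, and $|\varphi^\ast(z)|^2\Delta(z)^{-1}$ integrates back to the finite quantity $\|\varphi\|_2^2$. For unimodular $G$ all of this is trivial ($\Delta\equiv 1$, $\|\varphi^\ast\|_2=\|\varphi\|_2$), and the estimate $S_\varepsilon\le m\|\varphi\|_2^2$ with the $\varepsilon\to 0$ limit is immediate; the two-sided separatedness hypothesis is what supplies the overlap constant $m$ in either case.
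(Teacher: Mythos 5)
Your proof is correct, and its skeleton coincides with the paper's: both arguments probe the \emph{lower} frame bound with indicator functions of shrinking symmetric neighborhoods, both invoke Lemma \ref{separated} plus the hypothesis on $G$ to obtain relative separatedness of $\Gamma^{-1}$, both use Cauchy--Schwarz to dominate the frame sum by an integral of $|\varphi|^{2}$ over a shrinking union of translates, and both finish with dominated convergence, non-discreteness entering only through the fact that the limiting countable set is Haar-null. The differences lie in the bookkeeping, and they are real but not fundamental. First, where the paper converts the sum $\sum_{\gamma}\int_{\gamma^{-1}V}|\varphi|^{2}$ into integrals over unions by splitting $\Gamma^{-1}$ into finitely many $V$-separated pieces (Lemma \ref{uniform}), you instead use a covering-multiplicity bound $\sum_{\gamma\in\Gamma}\mathbf{1}_{U\gamma}(z)\le m$ extracted directly from relative separatedness of $\Gamma^{-1}$; the two devices are interchangeable, and yours lets you bypass Lemma \ref{uniform} (and its Zorn's-lemma argument) altogether. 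Second, the paper changes variables on the left, writing $\int_{V}|\varphi(\gamma^{-1}x)|^{2}\,dx=\int_{\gamma^{-1}V}|\varphi|^{2}$ by left invariance, so the modular function never appears; you change variables on the right, through $\varphi^{\ast}$ and the sets $U_{\varepsilon}\gamma$, which forces the factors $\Delta(\gamma)^{-1}$ and $\Delta(z)^{-1}$ that you then must (and correctly do) recombine via $\int_{G}|\varphi^{\ast}|^{2}\Delta^{-1}=\|\varphi\|_{2}^{2}$. The ``non-unimodular obstacle'' you flag at the end is thus an artifact of this choice: had you changed variables to the left, landing on translates $\gamma^{-1}U_{\varepsilon}$ with $\gamma^{-1}\in\Gamma^{-1}$ --- whose bounded overlap is exactly what relative separatedness of $\Gamma^{-1}$ provides --- the whole argument would be modular-function-free, as in the paper.
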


\begin{proof}
Suppose by ways of contradiction that the stated assumptions hold and that
there exists $\varphi\in L^{2}(G)$ and $\Gamma\subset G$ such that $\left(
\lambda\left(  \gamma\right)  \varphi\right)  _{\gamma\in\Gamma}$ is a frame
for $L^{2}(G).$ Thus, $\left(  \lambda\left(  \gamma\right)  \varphi\right)
_{\gamma\in\Gamma}$ is a Bessel sequence. By Lemma \ref{separated}, $\Gamma$
must be relatively separated. By assumption, $\Gamma^{-1}$ is also relatively
separated. Furthermore, according to Lemma \ref{uniform}, $\Gamma^{-1}$ can be
written as a disjoint union of separated sets. Let $\Gamma^{-1}=\bigcup
_{k=1}^{s}\Psi_{k}$ such that each collection $\left\{  xV:x\in\Psi
_{k}\right\}  $ consists of essentially disjoint subsets of $G.$ Then
\begin{align*}
\sum_{\gamma\in\Gamma}\left\vert \left\langle \chi_{V},\lambda\left(
\gamma\right)  \varphi\right\rangle \right\vert ^{2}  &  =\sum_{\gamma
\in\Gamma}\left\vert \left\langle \chi_{V},\chi_{V}\cdot\lambda\left(
\gamma\right)  \varphi\right\rangle \right\vert ^{2}\\
&  \leq\left\Vert \chi_{V}\right\Vert ^{2}\left(  {\sum\limits_{k=1}^{s}}%
\sum_{\gamma\in\Psi_{k}}\left\Vert \chi_{V}\cdot\lambda\left(  \gamma\right)
\varphi\right\Vert ^{2}\right) \\
&  =\left\Vert \chi_{V}\right\Vert ^{2}\left(  {\sum\limits_{k=1}^{s}}%
\sum_{\gamma\in\Psi_{k}}\int_{V}\left\vert \varphi\left(  \gamma^{-1}x\right)
\right\vert ^{2}dx\right) \\
&  =\left\Vert \chi_{V}\right\Vert ^{2}{\sum\limits_{k=1}^{s}}\left(
\int_{\bigcup_{\gamma\in\Psi_{k}}\gamma^{-1}V}\left\vert \varphi\left(
x\right)  \right\vert ^{2}dx\right)  .
\end{align*}
Note that since $\varphi$ is square-integrable,
\[
\int_{\bigcup_{\gamma\in\Psi_{k}}\gamma^{-1}V}\left\vert \varphi\left(
x\right)  \right\vert ^{2}dx\leq\int_{G}\left\vert \varphi\left(  x\right)
\right\vert ^{2}dx=\left\Vert \varphi\right\Vert ^{2}<\infty.
\]
By Lebesgue's Dominated Convergence Theorem, taking a nested family of
relatively compact and open sets converging to the singleton containing the
identity in $G,$ we obtain:
\[
\lim_{V\rightarrow\left\{  e\right\}  }\int_{\cup_{\gamma\in\Psi_{k}}%
\gamma^{-1}V}\left\vert \varphi\left(  x\right)  \right\vert ^{2}dx=0.
\]
Thus for any $\epsilon>0,$ there exists a sufficiently small open set $V$
around the identity such that
\[
\sum_{\gamma\in\Gamma}\left\vert \left\langle \chi_{V},\lambda\left(
\gamma\right)  \varphi\right\rangle \right\vert ^{2}\leq\epsilon\left\Vert
\chi_{V}\right\Vert ^{2}.
\]
This violates the lower frame bounds condition and gives us the desired contradiction.
\end{proof}

This observation allows to generalize \cite[Theorem 1.2]{ChDeHe} to a larger
class of groups called \textbf{[IN]-groups}: groups having a\textbf{\ compact
neighborhood} of the identity which is invariant under all inner automorphisms.

\begin{lemma}
\label{[IN]case} If $G$ is an [IN]-group then every relatively separated
subset of $G$ has a relatively separated inverse. In particular, nondiscrete
[IN]-groups are not FT.
\end{lemma}

\begin{proof}
Let $G$ be an [IN]-group. Then by definition, there exists a compact
neighborhood $W\subset G$ of the identity which is conjugation-invariant.
Next, let $\Gamma$ be a subset of $G$ which is relatively separated. By
assumption, $\sup_{x\in G}\sharp\left(  \Gamma\cap xW \right)  <\infty.$ On
the other hand for any $x\in G,$ note that
\[
\Gamma\cap xW =\Gamma\cap\left(  xW x^{-1}\right)  x = \Gamma\cap Wx~,
\]
and thus
\[
\left(  \Gamma\cap x W \right)  ^{-1} = \Gamma^{-1} \cap x^{-1} W^{-1}.
\]
Consequently, since inversion on $G$ is bijective,
\[
\sup_{y\in G}\sharp\left(  \Gamma^{-1}\cap y W^{-1}\right)  = \sup_{x \in G}
\sharp\left(  \Gamma^{-1}\cap x^{-1} W^{-1}\right)  = \sup_{x\in G}%
\sharp\left(  \Gamma\cap x W \right)  <\infty
\]
which proves that $\Gamma^{-1}$ is relatively separated.
\end{proof}

\begin{remark}
Clearly, Lemma \ref{[IN]case} applies to abelian groups, thus it directly
generalizes \cite[Theorem 1.2]{ChDeHe}.

A result due to Iwasawa \cite[Theorem 2]{iwasawa1951topological} yields that a
connected topological group $G$ is an [IN]-group if and only if the
topological commutator of $G$ is compact.

A nonabelian group to which this applies is the reduced Weyl-Heisenberg group,
which is the quotient of the simply connected, connected Heisenberg group by a
central discrete subgroup. More generally, Lemma \ref{[IN]case} also implies
that no step-two nilpotent Lie group with compact center is FT.
\end{remark}

It is currently open which groups have the property that inverses of
relatively separated sets are relatively separated again. As the previous
remark shows, some nonabelian nilpotent Lie groups do. However, \emph{simply
connected} nilpotent Lie groups generally do not:

\begin{lemma}
\label{lem:nilp_inv_relsep} If $G$ is a nonabelian, simply connected connected
nilpotent Lie group, then there exists a separated set $\Gamma\subset G$ such
that $\Gamma^{-1}$ is not relatively separated.
\end{lemma}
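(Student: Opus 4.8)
The plan is to exploit the exponential distortion of the group law in a simply connected nilpotent Lie group, where the Baker--Campbell--Hausdorff formula shows that conjugation and inversion can blow up distances dramatically. Concretely, I would work in exponential coordinates via the diffeomorphism $\exp\colon\g\to G$, so that Haar measure is Lebesgue measure on $\g$ and relative separatedness can be tested against Euclidean balls. Because $G$ is nonabelian, its Lie algebra $\g$ has nontrivial bracket, so I may pick $X,Y\in\g$ with $[X,Y]\neq 0$, and I would look for a one-parameter family whose inverses, after the BCH correction terms, are forced into an ever-shrinking region.

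First I would construct the separated set. The natural candidate is $\Gamma=\{\exp(nX)\exp(Y):n\in\ZZ\}$, or a similar discrete orbit along one subgroup twisted by a fixed element. To verify that $\Gamma$ is separated I would use that left translates $\exp(nX)$ march off to infinity at unit spacing in the $X$-direction (one-parameter subgroups are properly embedded in simply connected nilpotent groups), so the sets $xV$ are eventually pairwise disjoint for a small neighborhood $V$; this is the routine direction and I would not dwell on it. The crux is the inverse set $\Gamma^{-1}=\{\exp(-Y)\exp(-nX):n\in\ZZ\}$. Using BCH, $\exp(-Y)\exp(-nX)=\exp\!\big(-nX-Y+\tfrac{n}{2}[X,Y]+\cdots\big)$, and the key point is that the coefficient of $[X,Y]$ grows \emph{linearly} in $n$ while the generators $-nX$ grow only linearly as well. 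To force a clustering I would instead arrange the set so that the quadratic (or higher-order) BCH term dominates: for instance take $\Gamma=\{\exp(t_nX)\exp(s_nY)\}$ with $t_n,s_n$ chosen so that $t_nX+s_nY$ stays bounded (hence these exponents cluster in a compact region) while the separating data lives in the commutator term $\tfrac{1}{2}t_ns_n[X,Y]$, which I let tend to infinity. Then $\Gamma$ is separated because the commutator coordinate spreads the points apart, but $\Gamma^{-1}$ has all its points with bounded $X,Y$-coordinates and commutator coordinate collapsing, so infinitely many inverses land in a single compact neighborhood, defeating relative separatedness.

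The key steps in order are: (1) fix exponential coordinates and record how BCH expresses $\exp(U)\exp(W)$, truncating at the degree of nilpotency; (2) choose $X,Y$ with $[X,Y]\neq0$ spanning a two-dimensional solvable piece and select sequences $t_n,s_n$ making the first-order data bounded and the second-order commutator data unbounded; (3) prove separatedness of $\Gamma$ by showing the points are uniformly spread in the $[X,Y]$-coordinate, using that the group multiplication map is a polynomial diffeomorphism in these coordinates; (4) prove $\Gamma^{-1}$ is not relatively separated by exhibiting a fixed compact neighborhood $xU$ containing arbitrarily many inverse points, again reading off the coordinates from the truncated BCH expansion.

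The main obstacle will be step (3): I must ensure that the same second-order term that collapses in $\Gamma^{-1}$ genuinely \emph{separates} the points of $\Gamma$, rather than being cancelled by interaction with the unbounded first-order coordinates. This requires a careful bookkeeping of which BCH terms survive: the difference between the product and its inverse is precisely the sign pattern on the antisymmetric commutator terms, so I expect to need a clean lemma isolating the leading homogeneous component of $\log(\exp(U)\exp(W))$ and tracking its behavior under $U\mapsto -U$, $W\mapsto -W$. Once that asymmetry is pinned down, comparing $\Gamma$ and $\Gamma^{-1}$ in coordinates is a finite polynomial estimate, and the higher-step nilpotent terms contribute only lower-order corrections that do not affect the leading dichotomy.
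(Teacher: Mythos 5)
Your guiding intuition is correct --- the antisymmetry of the first Baker--Campbell--Hausdorff commutator term under inversion is exactly the engine of the paper's counterexample --- but the concrete construction you propose in step (2) cannot exist, for a simple linear-algebra reason. Since $[X,Y]\neq 0$, the vectors $X$ and $Y$ are linearly independent, so boundedness of the set $\{t_nX+s_nY\}$ in $\mathfrak{g}$ forces both sequences $(t_n)$ and $(s_n)$ to be bounded, and then $\tfrac{1}{2}t_ns_n[X,Y]$ is bounded as well. The regime ``first-order data bounded, commutator data unbounded'' is empty, so there are no sequences $t_n,s_n$ with the properties you require.

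There is a second, more structural obstruction: the failure mode you aim for is impossible in any locally compact group. A separated set meets every compact set $K$ in only finitely many points (its pairwise disjoint translates $\gamma V$ all lie in the relatively compact set $KV$, and each carries Haar measure $\mu_G(V)>0$), and inversion is a homeomorphism, so $\Gamma^{-1}\cap K=\left(\Gamma\cap K^{-1}\right)^{-1}$ is finite for every compact $K$ whenever $\Gamma$ is separated. Hence you can never defeat relative separatedness of $\Gamma^{-1}$ by placing infinitely many of its points in a single compact neighborhood; the failure must instead be witnessed by translates $x_NU$ whose centers $x_N$ escape to infinity and which contain arbitrarily many (but always finitely many) points. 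This is precisely what the paper's proof arranges: it first reduces to a closed Heisenberg subgroup via Kirillov's lemma (where BCH is exact at first order, which also disposes of your worry about higher-order terms), and there it takes $\Gamma^{-1}=\{\exp(N^2X_1)\exp((\ell/N^2)X_2):N\in\mathbb{N},\ 1\le\ell\le N\}$ --- note that the coefficient $N^2$ of $X_1$ is unbounded, the opposite of your requirement. For fixed $N$ these $N$ points lie within left-invariant quasi-distance $1/N$ of the moving center $\exp(N^2X_1)$, because the commutator term cancels the central coordinate; while any two distinct points of $\Gamma$ itself are at quasi-distance at least $1$, because under inversion the commutator term flips sign and doubles the central coordinate instead of cancelling it. Repairing your argument amounts to making the same move: let the linear coordinate blow up, let the cluster centers go off to infinity, and measure clustering with the left-invariant (quasi-)metric rather than against a fixed compact set.
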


\begin{proof}
We start out by considering the special case that $G$ is the simply connected
and connected three-dimensional Heisenberg Lie group, with Lie algebra spanned
by
\[
X_{1}=\left[
\begin{array}
[c]{ccc}%
0 & 1 & 0\\
0 & 0 & 0\\
0 & 0 & 0
\end{array}
\right]  ,X_{2}=\left[
\begin{array}
[c]{ccc}%
0 & 0 & 0\\
0 & 0 & 1\\
0 & 0 & 0
\end{array}
\right]  ,X_{3}=\left[
\begin{array}
[c]{ccc}%
0 & 0 & 1\\
0 & 0 & 0\\
0 & 0 & 0
\end{array}
\right]
\]
with non-trivial Lie brackets $\left[  X_{1},X_{2}\right]  =X_{3}.$ It is easy
to verify that
\[
\exp\left(  X\right)  \exp\left(  Y\right)  =\exp\left(  X+Y+\frac{1}%
{2}\left[  X,Y\right]  \right)  =\exp\left[
\begin{array}
[c]{ccc}%
0 & x_{1}+y_{1} & x_{3}+y_{3}+\frac{x_{1}y_{2}-x_{2}y_{1}}{2}\\
0 & 0 & x_{2}+y_{2}\\
0 & 0 & 0
\end{array}
\right]  .
\]
Next, we endow the Heisenberg Lie group with the following quasi-norm
\[
\left\Vert \exp\left[
\begin{array}
[c]{ccc}%
0 & x_{1} & x_{3}\\
0 & 0 & x_{2}\\
0 & 0 & 0
\end{array}
\right]  \right\Vert =\left(  \left(  x_{1}^{2}+x_{2}^{2}\right)  ^{2}%
+x_{3}^{4}\right)  ^{1/4}.
\]
This induces a left-invariant quasi-metric between two arbitrary elements by
\begin{align*}
&  d\left(  \exp\left(  x_{1}X_{1}+x_{2}X_{2}+x_{3}X_{3}\right)  ,\exp\left(
y_{1}X_{1}+y_{2}X_{2}+y_{3}X_{3}\right)  \right) \\
&  =\left\Vert \exp\left(  -x_{1}X_{1}-x_{2}X_{2}-x_{3}X_{3}\right)
\exp\left(  y_{1}X_{1}+y_{2}X_{2}+y_{3}X_{3}\right)  \right\Vert \\
&  =\left(  \left(  \left(  y_{1}-x_{1}\right)  ^{2}+\left(  y_{2}%
-x_{2}\right)  ^{2}\right)  ^{2}+\left(  y_{3}-x_{3}+\frac{x_{2}y_{1}%
-x_{1}y_{2}}{2}\right)  ^{4}\right)  ^{1/4}.
\end{align*}
For any natural number $N\in\mathbb{N}$, we define%
\[
U_{N}=\left[
\begin{array}
[c]{ccc}%
0 & N^{2} & 0\\
0 & 0 & 0\\
0 & 0 & 0
\end{array}
\right]  ,V_{N,\ell}=\left[
\begin{array}
[c]{ccc}%
0 & N^{2} & \frac{\ell}{2}\\
0 & 0 & \frac{\ell}{N^{2}}\\
0 & 0 & 0
\end{array}
\right]  ,\ell=1,\cdots,N.
\]
Let
\[
\Gamma=\left\{  \exp(-V_{M,\ell}):M\in\mathbb{N},\ell=1,\cdots,M\right\}  .
\]
Note that, for all $N \in\mathbb{N}$,
\[
d\left(  \exp U_{N},\exp V_{N,\ell}\right)  =\frac{\ell}{N^{2}}\leq\frac{1}%
{N}.
\]
In other words, the ball with center $U_{N}$ and radius $N^{-1}$ contains at
least $N$ elements of $\Gamma^{-1}$, which shows that $\Gamma^{-1}$ is not
relatively separated. On the other hand, it is easy to verify for distinct
$\exp\left(  V_{N,\kappa}\right)  ,\exp\left(  V_{M,\ell}\right)  \in\Gamma,$
that
\[
d\left(  \exp\left(  -V_{N,\kappa}\right)  ,\exp\left(  -V_{M,\ell}\right)
\right)  \geq1.
\]
Thus, $\Gamma$ is left-uniformly discrete.\newline Now, if $G$ is a simply
connected, connected nonabelian nilpotent Lie group of dimension larger than
three, then by Kirillov's lemma \cite{Corwin}, $G$ contains a closed subgroup
$H$ that is isomorphic to the simply connected Heisenberg group. By the case
of the three-dimensional Heisenberg Lie group, there exists a separated (in
$H$) $\Gamma\subset H$ with the property that $\Gamma^{-1}$ is not separated
(in $H$). However, for subsets of $H$, being separated in $H$ is the same as
being separated in $G$.
\end{proof}

A further interesting property of FT groups is a kind of universal sampling
theorem. First some terminology:

\begin{definition}
Given a unitary representation $\pi$ of a locally compact group $G$ acting on
a Hilbert space $\mathcal{H}_{\pi}$ and $\eta\in\mathcal{H}_{\pi}$, we define
the associated wavelet transform $V_{\eta}:\mathcal{H}_{\pi}\rightarrow
C_{b}(G)$ as
\[
V_{\eta}u(x)=\langle u,\pi(x)\eta\rangle~.
\]
$\eta$ is called \textbf{admissible} if $V_{\eta}:\mathcal{H}_{\pi}\rightarrow
L^{2}(G)$ is well-defined and isometric. We call $\pi$ \textbf{strongly
square-integrable} if there exists an admissible vector $\eta\in
\mathcal{H}_{\pi}$.
\end{definition}

It is well-known that admissible vectors associated to a strongly
square-integrable representations give rise to weak-sense inversion formulae
\[
u = \int_{G} V_{\eta}u(x) \pi(x) \eta\, dx\,\,.
\]
We can now derive a discretization result for generalized wavelet transforms
over FT groups. Observe here that the sampling set is \textbf{universal},
i.e., it is picked independently of the representation whose inversion formula
it discretizes.

\begin{theorem}
\label{thm:universal_samp} Assume that $G$ has a frame $(\lambda_{G}(x)
\varphi)_{x \in\Gamma}$ of translates for $L^{2}(G)$. Then for every strongly
square-integrable representation $\pi,$ there exists $\eta\in\mathcal{H}_{\pi
}$ such that $(\pi(x) \eta)_{x \in\Gamma} $ is a frame.
\end{theorem}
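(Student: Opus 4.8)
The plan is to transport the given frame of translates in $L^{2}(G)$ into $\mathcal{H}_{\pi}$ by means of the wavelet transform, exploiting the fact that this transform intertwines $\pi$ with $\lambda_{G}$ and realizes $\mathcal{H}_{\pi}$ isometrically as a left-invariant subspace of $L^{2}(G)$. I would begin by fixing an admissible vector $\eta_{0}\in\mathcal{H}_{\pi}$, whose existence is precisely the hypothesis of strong square-integrability, so that $V_{\eta_{0}}:\mathcal{H}_{\pi}\rightarrow L^{2}(G)$ is a well-defined isometry. A one-line computation using $V_{\eta_{0}}u(x)=\langle u,\pi(x)\eta_{0}\rangle$ gives the intertwining relation $V_{\eta_{0}}(\pi(y)u)=\lambda_{G}(y)V_{\eta_{0}}u$ for all $y\in G$ and $u\in\mathcal{H}_{\pi}$. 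Consequently the image $\mathcal{M}:=V_{\eta_{0}}(\mathcal{H}_{\pi})$ is a closed subspace of $L^{2}(G)$, closed because $V_{\eta_{0}}$ is isometric, and invariant under $\lambda_{G}$ because of the intertwining relation.

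Next I would introduce the orthogonal projection $P:L^{2}(G)\rightarrow\mathcal{M}$. Since $\lambda_{G}$ is unitary and $\mathcal{M}$ is $\lambda_{G}$-invariant, the orthogonal complement $\mathcal{M}^{\perp}$ is invariant as well, so $P$ commutes with every $\lambda_{G}(x)$. The argument then rests on two elementary frame-theoretic facts: (i) the image of a frame under an orthogonal projection $P$ is a frame of $P\mathcal{H}$ with the same frame bounds, and (ii) the image of a frame under a unitary operator is again a frame with the same bounds. Fact (i) follows because for $g\in P\mathcal{H}$ one has $\langle g,Pf_{i}\rangle=\langle g,f_{i}\rangle$, while (ii) is immediate from unitarity. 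Applying (i) to the given frame $(\lambda_{G}(x)\varphi)_{x\in\Gamma}$ and using $P\lambda_{G}(x)\varphi=\lambda_{G}(x)P\varphi$, I would set $\psi:=P\varphi\in\mathcal{M}$ and conclude that $(\lambda_{G}(x)\psi)_{x\in\Gamma}$ is a frame for $\mathcal{M}$.

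To finish, since $V_{\eta_{0}}$ is a unitary from $\mathcal{H}_{\pi}$ onto $\mathcal{M}$, there is a unique $\eta\in\mathcal{H}_{\pi}$ with $V_{\eta_{0}}\eta=\psi$, and the intertwining relation gives $V_{\eta_{0}}(\pi(x)\eta)=\lambda_{G}(x)\psi$. Applying the unitary $V_{\eta_{0}}^{-1}:\mathcal{M}\rightarrow\mathcal{H}_{\pi}$ and invoking fact (ii) then transports the frame $(\lambda_{G}(x)\psi)_{x\in\Gamma}$ of $\mathcal{M}$ to the family $(\pi(x)\eta)_{x\in\Gamma}$, which is therefore a frame of $\mathcal{H}_{\pi}$, as desired.

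I do not expect a serious obstacle, as the proof is a clean composition of standard facts. The one point deserving care is the verification that $P$ commutes with $\lambda_{G}$, i.e. the $\lambda_{G}$-invariance of $\mathcal{M}$; this is exactly what guarantees that the projected family again consists of $\Gamma$-translates (of $\psi$ rather than $\varphi$) and hence keeps the sampling set $\Gamma$ fixed, which is the crux of the \emph{universality} asserted in the statement.
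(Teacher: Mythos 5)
Your proposal is correct and follows essentially the same route as the paper's proof: both use the wavelet transform associated to an admissible vector as a unitary equivalence onto a closed, $\lambda_{G}$-invariant subspace of $L^{2}(G)$, project the given frame onto that subspace (using that the projection commutes with $\lambda_{G}$), and pull back via the inverse unitary; indeed your $\eta=V_{\eta_{0}}^{-1}P\varphi$ coincides with the paper's $\eta=V_{\psi}^{*}\varphi$. The only difference is that you spell out the standard frame-theoretic facts that the paper leaves implicit.
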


\begin{proof}
Let $\psi\in\mathcal{H}_{\pi}$ be an admissible vector for $\pi$. Then
$V_{\psi}$ is a unitary equivalence between $\mathcal{H}_{\pi}$ and
$\mathcal{H}_{\pi,\psi} = V_{\psi}(\mathcal{H}_{\pi})$, and the latter is a
closed, left-invariant subspace of $L^{2}(G)$. The orthogonal projection onto
$\mathcal{H}_{\pi,\psi}$ is $P = V_{\psi}V_{\psi}^{*}$. It follows that $(P
\lambda_{G}(x) \varphi)_{x \in\Gamma}$ is a frame of $\mathcal{H}_{\pi,\psi}$.
Then, since $V_{\psi}^{*}: \mathcal{H}_{\pi,\psi} \to\mathcal{H}_{\pi}$ is a
unitary equivalence intertwining the group actions, we finally see that $\eta=
V_{\psi}^{*} \varphi$ is as required.
\end{proof}

\begin{remark}
There is an alternative proof available of the fact that the reduced
Weyl-Heisenberg group is not FT, which makes use of the Theorem
\ref{thm:universal_samp}, in combination with well-known necessary density
conditions for Gabor frames, as derived in \cite{ChDeHe}.
\end{remark}

For the remainder of this section we concentrate on discrete groups. For the
following results, recall that a \emph{Riesz basis} of a Hilbert space is a
frame $(\eta_{i})_{i \in I}$ satisfying the additional inequality
\[
\sum_{i \in I} |c_{i}|^{2} \le C \left\|  \sum_{i \in I} c_{i} \eta_{i}
\right\|  ^{2}\,\,.
\]

\begin{theorem}
Let $G$ denote a discrete group, $\Gamma\subset G$ and $\varphi\in\ell^{2}%
(G)$. Then the following are equivalent:

\begin{enumerate}
\item[(a)] $(\lambda_{G}(x))_{x \in\Gamma}$ is a frame of $\ell^{2}(G)$.

\item[(b)] $\Gamma= G$, and $(\lambda_{G}(x))_{x \in G}$ is Riesz basis of
$\ell^{2}(G)$.
\end{enumerate}
\end{theorem}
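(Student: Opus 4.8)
The implication (b) $\Rightarrow$ (a) is immediate, since every Riesz basis is a frame. For (a) $\Rightarrow$ (b) the plan is to recast the whole problem in terms of the single (generally unbounded) right-convolution operator attached to $\varphi$, and then to exploit the finiteness of the group von Neumann algebra. Concretely, I would use the identity $\langle f,\lambda_{G}(x)\varphi\rangle=(f\ast\varphi^{\ast})(x)$, so that the analysis map associated with the full index set $G$ is right convolution $R_{\varphi^{\ast}}\colon f\mapsto f\ast\varphi^{\ast}$, while the corresponding synthesis map $\delta_{x}\mapsto\lambda_{G}(x)\varphi$ is right convolution $R_{\varphi}=R_{\varphi^{\ast}}^{\ast}\colon c\mapsto c\ast\varphi$, which sends the canonical orthonormal basis $(\delta_{x})_{x\in G}$ of $\ell^{2}(G)$ onto $(\lambda_{G}(x)\varphi)_{x\in G}$. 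First I would record that $R_{\varphi^{\ast}}$ is a closed, densely defined operator (finitely supported functions lie in its domain) that commutes with all left translations, hence is affiliated with the commutant $\mathcal{R}(G)=\lambda_{G}(G)'$; this is a finite von Neumann algebra, equipped with the faithful normal trace $\tau(T)=\langle T\delta_{e},\delta_{e}\rangle$.

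The decisive step is to extract surjectivity of $R_{\varphi^{\ast}}$ from the lower frame bound alone. Since $\Gamma\subseteq G$, the lower bound gives $\|R_{\varphi^{\ast}}f\|_{2}^{2}\ge\sum_{x\in\Gamma}|\langle f,\lambda_{G}(x)\varphi\rangle|^{2}\ge A\|f\|_{2}^{2}$, so (the closure of) $R_{\varphi^{\ast}}$ is bounded below by $\sqrt{A}$, hence injective with closed range. Now I would invoke the polar decomposition $R_{\varphi^{\ast}}=U|R_{\varphi^{\ast}}|$ with $U\in\mathcal{R}(G)$ and $|R_{\varphi^{\ast}}|\ge\sqrt{A}$: the latter is an invertible positive operator, so $U$ has full initial space, $U^{\ast}U=1$, while $UU^{\ast}$ is the range projection. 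Here the finiteness of $\mathcal{R}(G)$ enters, via $\tau(UU^{\ast})=\tau(U^{\ast}U)=\tau(1)$; faithfulness of $\tau$ then forces $UU^{\ast}=1$, so $U$ is unitary and $R_{\varphi^{\ast}}$ is onto all of $\ell^{2}(G)$. I expect this to be the main obstacle, not because the idea is hard but because it requires care with the von Neumann algebra theory of unbounded operators affiliated with $\mathcal{R}(G)$ (polar decomposition, spectral projections, and traciality of the partial isometry).

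With surjectivity in hand, $\Gamma=G$ follows by contradiction. If some $g_{0}\in G\setminus\Gamma$ existed, I would solve $R_{\varphi^{\ast}}f=\delta_{g_{0}}$ for a necessarily nonzero $f$; then $\langle f,\lambda_{G}(x)\varphi\rangle=(f\ast\varphi^{\ast})(x)=\delta_{g_{0}}(x)$ vanishes for every $x\in\Gamma$, whence $\sum_{x\in\Gamma}|\langle f,\lambda_{G}(x)\varphi\rangle|^{2}=0<A\|f\|_{2}^{2}$, contradicting the lower frame bound. Thus $\Gamma=G$.

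Finally, once $\Gamma=G$ the upper frame bound reads $\|R_{\varphi^{\ast}}f\|_{2}^{2}=\sum_{x\in G}|\langle f,\lambda_{G}(x)\varphi\rangle|^{2}\le B\|f\|_{2}^{2}$, so $R_{\varphi^{\ast}}$ is bounded, hence lies in $\mathcal{R}(G)$; being also bounded below and surjective, it is boundedly invertible. Therefore its adjoint $R_{\varphi}$ is a bounded invertible operator carrying the orthonormal basis $(\delta_{x})_{x\in G}$ onto $(\lambda_{G}(x)\varphi)_{x\in G}$, which is exactly the statement that the latter is a Riesz basis. This establishes (b) and closes the proof. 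I note that no separate treatment of finite $G$ is needed, since there $\mathcal{R}(G)$ is finite dimensional and the trace step reduces to ``injective implies surjective''.
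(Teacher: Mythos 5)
Your proof is correct, and it takes a genuinely different route from the paper's. The paper argues in the reverse order: it first tests the lower frame inequality on point masses $\delta_{y}$ to show that $\Gamma$ is uniformly dense (finitely many translates of $\Gamma$ cover $G$), concludes from this that $(\lambda_{G}(x)\varphi)_{x\in G}$ is a frame, then passes to the canonical tight frame generator $\eta=S_{\varphi}^{-1/2}\varphi$, observes $\eta^{\ast}\ast\eta=\delta_{e}$ and hence $\Vert\eta\Vert=1$, invokes the fact that a Parseval frame of unit vectors is an orthonormal basis to get that $(\lambda_{G}(x)\varphi)_{x\in G}$ is a Riesz basis, and only at the very end deduces $\Gamma=G$ from the incompleteness of proper subfamilies of a Riesz basis. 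You instead make the finiteness of the group von Neumann algebra explicit and front-load it: polar decomposition of the affiliated operator $R_{\varphi^{\ast}}$ plus traciality upgrades ``bounded below'' to ``surjective'', which yields $\Gamma=G$ from the lower frame inequality alone, the upper bound entering only in the final step. Both proofs ultimately exploit the same finiteness phenomenon --- the paper's evaluation $\Vert\eta\Vert^{2}=(\eta^{\ast}\ast\eta)(e)=1$ is precisely your trace $\tau$ in disguise --- but your ordering buys something concrete: you never need to promote the frame property from $\Gamma$ to all of $G$. That promotion is the delicate point of the paper's argument in the nonabelian setting, since the density step produces a covering of $G$ by right translates $\Gamma u$, whereas Bessel bounds for $(\lambda_{G}(x)\varphi)_{x\in\Gamma}$ transfer painlessly only to left translates $u\Gamma$ (one has $\langle f,\lambda_{G}(u\gamma)\varphi\rangle=\langle\lambda_{G}(u)^{\ast}f,\lambda_{G}(\gamma)\varphi\rangle$, while $\lambda_{G}(\gamma u)\varphi=\lambda_{G}(\gamma)\lambda_{G}(u)\varphi$ changes the window); your argument sidesteps this left/right asymmetry entirely, and as a byproduct shows the stronger statement that the lower bound alone forces $\Gamma=G$. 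The price is the heavier machinery --- closedness, affiliation, polar decomposition of unbounded operators, faithful normal traces --- which is exactly the point you flagged as requiring care, and which you handle correctly.
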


\begin{proof}
We only need to prove $(a) \Rightarrow(b)$. We first show that $\Gamma\subset
G$ is uniformly dense. We prove this by contradiction. Let $A$ denote the
lower frame bound, and pick $U \subset G$ finite such that
\[
\sum_{x \not \in U} |\varphi(x)|^{2} < A\,\,.
\]
The assumption that $\Gamma$ is not uniformly dense implies the existence of
$y \in G$ such that $y U \cap\Gamma$ is empty. We then get
\begin{align*}
A  &  \le\sum_{x \in\Gamma} \left|  \langle\mathbf{1}_{\{y\}} , \lambda_{G}(x)
\varphi\rangle\right|  ^{2}\\
&  = \sum_{x \in y^{-1} \Gamma} |\varphi(x)|^{2}\\
&  \le\sum_{x \not \in U} |\varphi(x)|^{2} < A
\end{align*}
by choice of $y$ and $A$, respectively. This is the desired contradiction.

Hence $\Gamma$ is uniformly dense, which means that $(\lambda_{G}(x)
\varphi)_{x \in G}$ is a frame as well. Thus the associated frame operator
$S_{\varphi}= V_{\varphi}^{*} V_{\varphi}: f \mapsto f \ast\varphi^{*}
\ast\varphi$ is a bounded, self-adjoint operator with bounded inverse,
commuting with left translations on $G$. Hence, letting $\eta= S_{\varphi
}^{-1/2} \varphi$ yields a tight frame generator, i.e., we have that $f
\mapsto f \ast\eta^{*}$ is an isometry, or equivalently, that
\[
f = V_{\eta}^{*} V_{\eta}f = f \ast\eta^{*} \ast\eta~,
\]
holds for all $f \in\ell^{2}(G)$. In particular
\[
\delta_{e} = \delta_{e} \ast\eta^{*} \ast\eta=\eta^{*} \ast\eta~,
\]
and thus $\| \eta\|^{2} = ( \eta^{*} \ast\eta)(e ) = 1$. We thus obtain that
$(\lambda_{G}(x) \eta)_{x \in G} \subset\ell^{2}(G)$ is a Parseval frame
consisting of unit vectors; and it is well known that such frames are actually
orthonormal bases.

But then $(\lambda_{G}(x)\varphi)_{x\in G}=(S_{\varphi}^{1/2}\lambda
_{G}(x)\eta)_{x\in G}$ is a Riesz basis, as the image of an orthonormal basis
under an invertible operator. In particular, the system $(\lambda
_{G}(x)\varphi)_{x\in\Lambda}$ is incomplete in $\ell^{2}(G)$, for every
proper subset $\Lambda$ of $G$. Thus $\Gamma= G$ follows.
\end{proof}

We conjecture that a group $G$ has a Riesz basis of translates if and only if
it is discrete.

\section{Sufficient criteria}

The results established so far do not seem to point towards the existence of
nondiscrete FT groups. As Proposition \ref{prop:phi_path} shows, the functions
$\varphi$ occurring in frames of translates are necessarily somewhat
pathological, an observation that seems to raise the bar somewhat further.
Nonetheless, the remainder of this paper will show that FT groups exist in
abundance. The strategy for proving such a result rests on two observations.
The first one is the following remarkably general discretization result,
recently established by Freeman and Speegle \cite[Theorem 1.3]{FS}:

\begin{theorem}
\label{thm:discretize} Let $(\eta_{x})_{x \in X} \subset\mathcal{H}$ denote a
family of bounded vectors in a separable Hilbert space, measurably indexed by
$x \in X$, where $(X,\mu)$ is a $\sigma$-finite measure space. Assume that
$(\eta_{x})_{x \in X}$ is a \textbf{continuous frame with respect to $\mu$},
i.e., there exist constants $0 < A \le B < \infty$ such that
\[
\forall g \in\mathcal{H}~:~A \| g \|^{2} \le\int_{X} |\langle g, \eta_{x}
\rangle|^{2} d\mu(x) \le B \| g \|^{2}~.
\]
Then there exists a countable family $(x_{i})_{i \in I}$ such that
$(\eta_{x_{i}})_{i \in I}$ is a frame.
\end{theorem}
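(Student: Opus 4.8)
The plan is to deduce the statement from the Marcus--Spielman--Srivastava solution of the Kadison--Singer problem, used in the form of Weaver's $KS_2$ assertion: there are universal constants $\delta_0>0$ and $\theta\in(0,1)$ such that whenever $(u_i)_{i}$ is a family of vectors in a separable Hilbert space with $\sum_i u_i u_i^{\ast}=\mathrm{Id}$ (strong operator convergence) and $\sup_i\|u_i\|^2\le\delta_0$, the index set splits into two blocks $S_1,S_2$ with $\|\sum_{i\in S_j}u_iu_i^{\ast}\|\le 1-\theta$ for $j=1,2$. Since the two blocks sum to $\mathrm{Id}$, each then obeys $\theta\,\mathrm{Id}\le\sum_{i\in S_j}u_iu_i^{\ast}\le(1-\theta)\,\mathrm{Id}$, so the subfamily indexed by $S_1$ is already a frame with universal bounds $\theta$ and $1-\theta$. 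The entire proof consists in manufacturing, out of the given continuous frame, a discrete resolution of the identity by uniformly small vectors to which this dichotomy applies.

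First I would normalize. Let $S=\int_X\langle\,\cdot\,,\eta_x\rangle\eta_x\,d\mu(x)$ be the continuous frame operator, so that $A\,\mathrm{Id}\le S\le B\,\mathrm{Id}$ and $S^{-1/2}$ is bounded; replacing each $\eta_x$ by $S^{-1/2}\eta_x$ makes the system a continuous Parseval frame whose vectors are still uniformly bounded, by $A^{-1/2}\sup_x\|\eta_x\|$, and a countable subfamily of $(S^{-1/2}\eta_x)_x$ is a frame if and only if the corresponding subfamily of $(\eta_x)_x$ is, the two being related by the bounded invertible $S^{1/2}$. Hence I may assume $\int_X|\langle g,\eta_x\rangle|^2\,d\mu=\|g\|^2$ and $\sup_x\|\eta_x\|=M<\infty$. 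The technical core is then to show, using $\sigma$-finiteness together with boundedness of the $\eta_x$, that one can choose a countable sampling $x_k\in X_k$ along a partition $X=\bigsqcup_k X_k$ into sets of finite, small measure for which the weighted system $(\sqrt{\mu(X_k)}\,\eta_{x_k})_k$ is a genuine discrete frame with bounds arbitrarily close to $1$ and with each $\mu(X_k)\|\eta_{x_k}\|^2$ below any prescribed $\delta$. To reach an \emph{unweighted} conclusion I would render the weights uniform by multiplicity: fixing a common small $\tau>0$, approximate each $\mu(X_k)$ by $m_k\tau$ with $m_k\in\mathbb{N}$ to within error $\le\tau$ and list $\eta_{x_k}$ exactly $m_k$ times, which is legitimate because the conclusion concerns a \emph{family} $(x_i)_{i\in I}$ and so tolerates repetitions; this turns $\sum_k\mu(X_k)\eta_{x_k}\eta_{x_k}^{\ast}$ into $\tau\sum_{j}\eta_{y_j}\eta_{y_j}^{\ast}$ for the repeated list $(y_j)_j$. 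Rescaling by the inverse square root of the resulting frame operator yields an exact resolution $\sum_j u_ju_j^{\ast}=\mathrm{Id}$ with $\sup_j\|u_j\|^2\le\delta_0$, each $u_j$ being a \emph{common} scalar multiple of the corresponding $\eta_{y_j}$; applying $KS_2$, keeping the block $S_1$, and undoing the rescalings then leaves a countable family (with repetitions) $(x_i)$ for which $(\eta_{x_i})$ is a frame, the global factor $\tau$ absorbed harmlessly into the bounds.

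The hard part is twofold. The deepest ingredient is the \emph{infinite-dimensional} form of $KS_2$ with universal constants, since the Marcus--Spielman--Srivastava theorem is proved for finitely many vectors in $\mathbb{C}^d$; I would recover it by compactness, truncating to the first $n$ sampled vectors, compressing to a finite-dimensional subspace, applying the finite-dimensional theorem there, and extracting a cluster point of the $\{1,2\}$-valued partition functions in the compact product space $\{1,2\}^{\mathbb{N}}$, then checking that the uniform norm bounds survive the limit. The second, more technical, obstacle is the measure-theoretic discretization invoked above: one must produce the sampling so that the weighted discrete system is \emph{genuinely} a frame with bounds near those of the continuous frame while simultaneously keeping every individual weight below the Weaver threshold, and for a merely measurable, possibly wild family $(\eta_x)_x$ this control of the lower bound under sampling is exactly where boundedness (and the kind of local amalgam/maximal-function control appearing earlier in the paper) is essential. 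Finally, atoms of $\mu$ must be handled separately, since they cannot be subdivided into arbitrarily small pieces and so are fed directly into the construction rather than through the partition.
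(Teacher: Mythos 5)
The first thing to note is that the paper contains no proof of this statement at all: Theorem \ref{thm:discretize} is quoted verbatim from Freeman and Speegle \cite[Theorem 1.3]{FS}, so the only meaningful comparison is with their published argument. Your overall strategy --- normalize to a continuous Parseval frame via $S^{-1/2}$, discretize it into a weighted frame with uniformly small weights, uniformize the weights by repetition, and then invoke an infinite-dimensional Weaver-type $KS_2$ dichotomy to extract an unweighted subfamily --- is genuinely in the spirit of Freeman--Speegle, whose proof also rests on the Marcus--Spielman--Srivastava solution of Kadison--Singer. So the route is the right one in outline. The problem is that the two steps you yourself label ``the hard part'' are precisely where the substance of the theorem lies, and the mechanisms you sketch for them would fail as stated.

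First, the weighted-sampling step. You assert that $\sigma$-finiteness plus boundedness allows one to choose a partition $X=\bigcup_k X_k$ into sets of finite, \emph{small measure} and points $x_k\in X_k$ so that $\left(\sqrt{\mu(X_k)}\,\eta_{x_k}\right)_k$ is a frame with bounds near $1$. Small measure is not the relevant property: for a merely weakly measurable field, a cell of arbitrarily small measure can have essential range spread over infinitely many directions, so no single sample point weighted by $\mu(X_k)$ represents $\int_{X_k}\eta_x\eta_x^{\ast}\,d\mu$ --- refining the partition does not help. What one actually needs is small \emph{oscillation} of $x\mapsto\eta_x$ on each cell, which requires Pettis' measurability theorem (the field is essentially separably valued, hence approximable by countably-valued fields), and even then the per-cell errors must be made summable against the cell measures, since $\mu(X)$ is typically infinite; none of this appears in your outline, and the ``amalgam/maximal-function control'' you point to lives on a group and has no analogue on an abstract measure space. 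Second, the quantization step has a concrete failure mode: with infinitely many cells, per-term errors $\le\tau$ do not sum to anything finite in operator norm, and rounding $m_k=\lfloor \mu(X_k)/\tau\rfloor$ discards every cell of weight below $\tau$; those discarded cells can carry an arbitrarily large share of the frame operator (all of it, if every weight happens to be smaller than $\tau$), destroying the lower bound. Making this step sound requires all weights to lie in a fixed window $[c\tau,C\tau]$, bounded below as well as above, which is in tension with your simultaneous requirement that each $\mu(X_k)\|\eta_{x_k}\|^2$ be small, and is nowhere arranged. (The infinite-dimensional $KS_2$ via compactness is the least serious issue --- such extensions exist in the literature --- but note your finite truncations only satisfy $\sum_i u_iu_i^{\ast}\le \mathrm{Id}$, so one needs the sub-identity form of Weaver's statement.) In short: correct strategy, correctly identified obstacles, but the proposal does not overcome them, and overcoming them is essentially the content of the Freeman--Speegle paper.
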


The main consequence of this result is the following theorem which reveals a
first, large class of FT groups.

\begin{theorem}
\label{thm:typeI_unimod} Let $G$ be type I and non-unimodular. Then $G$ is an
FT group.
\end{theorem}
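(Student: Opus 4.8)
The plan is to exhibit a \emph{continuous} frame of translates for $L^2(G)$ and then invoke the Freeman--Speegle discretization result, Theorem \ref{thm:discretize}, to extract a countable frame from it. The whole argument hinges on a single structural input: that the left regular representation $\lambda_G$ is itself strongly square-integrable. This is exactly where the assumptions that $G$ be type I and non-unimodular are used.

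The first step is to produce an admissible vector $\varphi \in L^2(G)$ for $\lambda_G$. For a second countable, type I, non-unimodular group this is furnished by the Duflo--Moore / Plancherel theory: in the decomposition $L^2(G) \cong \int_{\widehat{G}}^{\oplus} \mathrm{HS}(\mathcal{H}_\pi)\, d\nu(\pi)$ the semi-invariant Duflo--Moore operators $C_\pi$ become densely defined and unbounded precisely when the modular function satisfies $\Delta_G \not\equiv 1$, and it is this feature that allows one to assemble a field $\widehat\varphi(\pi)$ with the integrability needed for $\varphi \in L^2(G)$ and for $V_\varphi$ to be isometric. (For a non-discrete unimodular group no such $\varphi$ exists, which is why non-unimodularity cannot be dispensed with.) Granting this, the analysis operator $V_\varphi : L^2(G) \to L^2(G)$, given by $V_\varphi g = g \ast \varphi^*$ and satisfying $V_\varphi g(x) = \langle g, \lambda_G(x)\varphi\rangle$, is an isometry, so that
\[
\int_G |\langle g, \lambda_G(x)\varphi\rangle|^2\, d\mu_G(x) = \|g\|_2^2 \qquad (g \in L^2(G)).
\]
In other words, $(\lambda_G(x)\varphi)_{x \in G}$ is a continuous tight frame with respect to $\mu_G$, with frame bounds $A = B = 1$.

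The second step is to check that this continuous frame meets the hypotheses of Theorem \ref{thm:discretize} and to apply it. Because $G$ is second countable, $L^2(G)$ is separable and $\mu_G$ is $\sigma$-finite; the family is continuously, hence measurably, indexed by $x \in G$ since $\lambda_G$ is strongly continuous; and it consists of uniformly bounded vectors, as $\|\lambda_G(x)\varphi\|_2 = \|\varphi\|_2$ for all $x$ by unitarity. Theorem \ref{thm:discretize} then yields a countable family $(x_i)_{i \in I} \subset G$ for which $(\lambda_G(x_i)\varphi)_{i \in I}$ is a frame of $L^2(G)$. Passing to the subset $\Gamma = \{x_i : i \in I\}$ as in the remark comparing families with subsets, we conclude that $G$ is an FT group.

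I expect the only genuine difficulty to be the first step, the existence of the admissible vector; everything after it is a verification of hypotheses followed by a black-box application of Theorem \ref{thm:discretize}, requiring no further information about $G$. In particular the non-unimodularity is precisely the condition that renders $\lambda_G$ strongly square-integrable in the type I case, so that the two hypotheses of the theorem are consumed entirely in this first step.
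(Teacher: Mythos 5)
Your proposal is correct and follows essentially the same route as the paper: the paper likewise obtains an admissible vector for $\lambda_G$ from the type I, non-unimodular hypotheses (citing F\"uhr's results, which encapsulate the Duflo--Moore/Plancherel argument you sketch) and then applies Theorem \ref{thm:discretize} to the resulting continuous tight frame $(\lambda_G(x)\varphi)_{x\in G}$. Your additional verification of the discretization theorem's hypotheses (separability, $\sigma$-finiteness, boundedness, measurability) is a useful elaboration but not a departure.
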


\begin{proof}
By \cite{Fu_02,Fu_LN}, $\lambda_{G}$ has an admissible vector $\eta$. I.e.,
the family $(\lambda_{G}(x) \eta)_{x \in G}$ is a continuous frame with
respect to $\mu_{G}$. Theorem \ref{thm:discretize} yields a family $(x_{i})_{i
\in I} \subset G$ such that $(\lambda_{G}(x_{i}) \eta)_{i \in I}$ is a frame.
\end{proof}

The second observation enlarges the class of FT further, by considering the
restriction of the regular representation to a suitable closed subgroup.

\begin{definition}
Given a representation $\pi$ of $G$, we say that $\pi$\textbf{\ has infinite
multiplicity} if $\pi\simeq\infty\cdot\pi.$
\end{definition}

\begin{remark}
\label{rem:inf_multiplicity} Let $G$ denote a type I group. Then the
Plancherel transform of $G$ gives rise to a unique direct integral
decomposition
\[
\lambda_{G} \simeq\int_{\widehat{G}} m_{\sigma}\cdot\sigma d\nu_{G}(\sigma)~,
\]
where $\nu_{G}$ is the \textbf{Plancherel measure} of $G$, and the
multiplicity $m_{\sigma}$ with which $\sigma\in\widehat{G}$ enters the
Plancherel decomposition is equal to the Hilbert space dimension of
$\mathcal{H}_{\sigma}$. In particular, $\lambda_{G}$ has infinite multiplicity
if and only if $m_{\sigma}= \infty$, for $\nu_{G}$-almost every $\sigma$.

Important classes of groups for which this holds are the nonunimodular type I
groups, and the nonabelian connected nilpotent Lie groups. In both cases,
$\nu_{G}$-almost all irreducible representations are induced from subgroups of
infinite index: For the nonunimodular case, this was established in
\cite{DuMo}; for the nilpotent case, it follows by Kirillov's orbit method,
see \cite{Corwin}. But the representation spaces associated to these
representations are then infinite-dimensional.

It is worthwhile noting that if a representation $\lambda_{G}$ has infinite
multiplicity, then
\[
\lambda_{G} \simeq m \cdot\lambda_{G}%
\]
holds as well, for all natural numbers $m$.
\end{remark}

The following result exploits an idea due to Iverson \cite[Theorem
3.6]{Iverson} for our purposes.

\begin{theorem}
\label{thm:subgt_ft} Let $H<G$ denote a closed subgroup that has FT, and such
that $\lambda_{H}$ has infinite multiplicity. Then $G$ is FT. In fact, there
exists a vector $\varphi\in L^{2}(G)$ and $\Gamma\subset H$ such that
$(\lambda_{G}(x) f)_{x \in\Gamma} \subset L^{2}(G)$ is a frame.
\end{theorem}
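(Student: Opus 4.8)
The plan is to reduce the statement to a clean structural fact about the restriction $\lambda_G|_H$ of the left regular representation to the subgroup $H$, and then to transport a frame of translates for $H$ to one for $G$ via a suitable unitary intertwiner.

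First I would analyze $\lambda_G|_H$. The left action of $H$ on $G$ by left translation preserves each right coset $Hx$, since $\eta Hx = Hx$ for $\eta\in H$; thus the fibers of the quotient map $q\colon G\to H\backslash G$ are invariant under $\lambda_G|_H$. Since left Haar measure $\mu_G$ is in particular left-$H$-invariant, its conditional measures along these fibers are left-$H$-invariant measures on the copies $Hx\cong H$, hence proportional to left Haar measure $\mu_H$, with a proportionality factor depending only on the coset. Absorbing that factor into a measure $\nu$ on $H\backslash G$, I obtain a unitary identification $L^2(G)\simeq L^2(H)\otimes L^2(H\backslash G,\nu)$ under which $H$ acts by $\lambda_H\otimes I$. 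Consequently $\lambda_G|_H\simeq k\cdot\lambda_H$, where $k=\dim L^2(H\backslash G,\nu)\in\{1,2,\ldots,\infty\}$ is at most countably infinite because $G$ is second countable. This disintegration is the step I expect to require the most care (quasi-invariant measures on $H\backslash G$, the quotient integral formula), though it is classical and is precisely the structural input underlying Iverson's argument.

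Next I would invoke the hypothesis that $\lambda_H$ has infinite multiplicity, i.e.\ $\lambda_H\simeq\infty\cdot\lambda_H$. This immediately absorbs the multiplicity $k$: indeed $k\cdot\lambda_H\simeq k\cdot(\infty\cdot\lambda_H)\simeq\infty\cdot\lambda_H\simeq\lambda_H$, where countability of $k$ (from second countability) guarantees that $k\cdot\infty$ is again the countably infinite multiplicity. Combining with the previous step yields a unitary $U\colon L^2(G)\to L^2(H)$ satisfying $U\lambda_G(h)=\lambda_H(h)U$ for all $h\in H$.

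Finally I would transport the frame. Since $H$ is FT, I choose $\Gamma\subset H$ and $\psi\in L^2(H)$ such that $(\lambda_H(\gamma)\psi)_{\gamma\in\Gamma}$ is a frame for $L^2(H)$ with bounds $A\le B$, and I set $\varphi=U^{*}\psi\in L^2(G)$. For $\gamma\in\Gamma$ the intertwining relation gives $\lambda_G(\gamma)\varphi=U^{*}\lambda_H(\gamma)\psi$, so that $\langle g,\lambda_G(\gamma)\varphi\rangle=\langle Ug,\lambda_H(\gamma)\psi\rangle$ for every $g\in L^2(G)$. Summing the squared moduli and using that $U$ is an isometry, the frame inequalities for $\psi$ applied to $Ug$ translate verbatim into $A\|g\|_2^{2}\le\sum_{\gamma\in\Gamma}|\langle g,\lambda_G(\gamma)\varphi\rangle|^{2}\le B\|g\|_2^{2}$. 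Hence $(\lambda_G(\gamma)\varphi)_{\gamma\in\Gamma}$ is a frame for $L^2(G)$ with $\Gamma\subset H$, which is exactly the assertion.
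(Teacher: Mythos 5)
Your proposal is correct and follows essentially the same route as the paper's proof: both establish a unitary equivalence $L^{2}(G)\simeq L^{2}(H)\otimes L^{2}(H\backslash G)$ intertwining $\lambda_{G}|_{H}$ with $\lambda_{H}\otimes\mathbf{1}$ (the paper via a Borel section and Weil's formula, you via disintegration of $\mu_{G}$ along right cosets --- the same classical input), then absorb the countable multiplicity using $\lambda_{H}\simeq\infty\cdot\lambda_{H}$, and transport a frame of translates for $L^{2}(H)$ through the resulting intertwiner. Your final verification of the frame inequalities merely spells out what the paper leaves implicit.
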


\begin{proof}
Using a measurable set of coset representatives $C$ mod $H$, we have a Borel
isomorphism $H \times C \ni(h,x) \mapsto hx \in G$. Furthermore, this
isomorphism intertwines the left action $H \times(H \times C) \ni(h_{1}%
,(h_{2},x)) \mapsto(h_{1} h_{2}, x) \in H \times C$ with the left action of
$H$ on $G $. By Weil's formula, we can then identify the Haar measure on $G$
with the product of Haar measure on $H$, and a suitable choice of measure on
the quotient. Since the latter is Borel isomorphic to $C$, this identification
induces a unitary equivalence $L^{2}(G) \to L^{2}(H) \otimes L^{2}(C)$.
Since the Borel isomorphism intertwines the respective actions on $H \times C$
and $G$, the unitary map intertwines $\lambda_{G}|_{H} $ with $\lambda_{H}
\otimes\mathbf{1}$, where $\mathbf{1}$ denotes the trivial representation
acting on $L^{2}(C)$.

Hence, denoting by $\kappa\in\mathbb{N}\cup\{\infty\}$ the Hilbert space
dimension of $L^{2}(C)$, we find
\[
\lambda_{G}|_{H}\simeq\kappa\cdot\lambda_{H}\simeq\lambda_{H}~.
\]
By assumption on $H$, there exists a frame of translates in $L^{2}(H)$, and
the image of this frame under the intertwining operator $\lambda_{H}%
\simeq\lambda_{G}|_{H}$ has the desired properties.
\end{proof}

The simplest example of a type I nonunimodular group is the $ax+b$-group, the
semidirect product $\mathbb{R}\rtimes\mathbb{R}^{+}$. Note that
$SL(2,\mathbb{R})$ contains the closed subgroup
\[
\left\{  \left[
\begin{array}
[c]{cc}%
a & b\\
0 & a^{-1}%
\end{array}
\right]  :a>0,b\in\mathbb{R}\right\}
\]
which is isomorphic to the $ax+b$ group. Hence Theorem \ref{thm:subgt_ft}
implies the following:

\begin{corollary}
$SL(2,\mathbb{R})$ is an FT group.
\end{corollary}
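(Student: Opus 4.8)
The plan is to deduce the result by chaining the two sufficiency criteria of this section, applied to the upper triangular subgroup $H$ exhibited just above. First I would record that $H=\left\{\begin{pmatrix} a & b \\ 0 & a^{-1}\end{pmatrix}: a>0,\, b\in\mathbb{R}\right\}$ is a closed subgroup of $G=SL(2,\mathbb{R})$ and that the map sending such a matrix to the pair $(a,b)$ is a topological group isomorphism onto the $ax+b$ group $\mathbb{R}\rtimes\mathbb{R}^{+}$. Since the $ax+b$ group is the standard example of a second countable, type I, non-unimodular group, Theorem \ref{thm:typeI_unimod} immediately yields that $H$ is an FT group.

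To invoke Theorem \ref{thm:subgt_ft} with this $H<G$, it remains to verify the hypothesis that $\lambda_{H}$ has infinite multiplicity. This is exactly the content of the non-unimodular type I case recalled in Remark \ref{rem:inf_multiplicity}: for such groups $\nu_{H}$-almost every irreducible representation is induced from a subgroup of infinite index and hence acts on an infinite-dimensional space, so the Plancherel multiplicity $m_{\sigma}$ equals $\infty$ for $\nu_{H}$-almost every $\sigma$, which is precisely infinite multiplicity. (For the $ax+b$ group one may also see this directly, since its Plancherel measure is supported on two infinite-dimensional irreducible representations.) With both hypotheses of Theorem \ref{thm:subgt_ft} in place, the conclusion that $SL(2,\mathbb{R})$ is FT follows at once, and in fact the shift set $\Gamma$ may be taken inside $H$.

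The remaining points require only bookkeeping care. The identification of $H$ with $\mathbb{R}\rtimes\mathbb{R}^{+}$ should be checked to be a homeomorphism, not merely an abstract isomorphism, so that both the closedness of $H$ and the Weil-formula machinery underlying Theorem \ref{thm:subgt_ft} apply; this is clear from the explicit parametrization. I expect the genuinely load-bearing step, and the one a careful reader will scrutinize, to be the infinite-multiplicity condition on $\lambda_{H}$: it is exactly what forces the equivalence $\lambda_{G}|_{H}\simeq\kappa\cdot\lambda_{H}\simeq\lambda_{H}$ to hold regardless of the (here uncountable) index of $H$ in $G$, and thereby transports the frame of translates from $L^{2}(H)$ to $L^{2}(G)$.
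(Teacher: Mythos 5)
Your proposal is correct and follows exactly the paper's route: identify the upper triangular subgroup $H$ with the $ax+b$ group, obtain FT for $H$ from Theorem \ref{thm:typeI_unimod}, note that $\lambda_{H}$ has infinite multiplicity via Remark \ref{rem:inf_multiplicity} (nonunimodular type I case), and conclude by Theorem \ref{thm:subgt_ft}. You have merely made explicit the infinite-multiplicity verification that the paper leaves implicit, which is a fair and accurate reading of the intended argument.
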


Note that $SL(2,\mathbb{R})$ is unimodular. Hence unimodular FT groups do
exist. Moreover, let $p,q$ be natural numbers satisfying $p+q>2$. Next, let
\[
\mathrm{SO}\left(  p,q\right)  =\left\{  M\in GL\left(  p+q,\mathbb{R}\right)
:M^{tr}J\left(  p,q\right)  M=J\left(  p,q\right)  \right\}
\]
where $J\left(  p,q\right)  =\mathrm{diag}\left(  1,\cdots,1,-1,\cdots
,-1\right)  $ such that the trace of $J\left(  p,q\right)  $ is equal to
$p-q.$ Then $\mathrm{SO}\left(  p,q\right)  $ is a closed subgroup of
$GL\left(  p+q,\mathbb{R}\right)  $ and the following holds true. 

\begin{corollary}
Given natural numbers $p,q$ such that $p+q>2,$ $\mathrm{SO}\left(  p,q\right)
$ is an FT group
\end{corollary}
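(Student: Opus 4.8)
The plan is to realize inside $\mathrm{SO}(p,q)$ a closed subgroup isomorphic to the $ax+b$ group $\mathbb{R}\rtimes\mathbb{R}^{+}$ and then feed this into the transfer machinery already used for $SL(2,\mathbb{R})$. Recall that $\mathbb{R}\rtimes\mathbb{R}^{+}$ is a type I, non-unimodular Lie group; hence it is FT by Theorem \ref{thm:typeI_unimod}, and its left regular representation has infinite multiplicity by Remark \ref{rem:inf_multiplicity}. Consequently, once such a closed subgroup $H<\mathrm{SO}(p,q)$ is produced, Theorem \ref{thm:subgt_ft} immediately gives that $\mathrm{SO}(p,q)$ is FT. Thus the entire content of the argument is the construction of $H$. (Throughout I take $p,q\geq1$, which is forced if $\mathrm{SO}(p,q)$ is to have indefinite signature; note also that $p+q>2$ excludes exactly the case $p=q=1$, where the identity component $\mathrm{SO}(1,1)^{0}\cong\mathbb{R}$ is abelian and therefore not FT, so the hypothesis is sharp for this method.)

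First I would reduce to three dimensions. Since $M^{tr}J(p,q)M=J(p,q)$ is equivalent to $M^{tr}(-J(p,q))M=-J(p,q)$, one has $\mathrm{SO}(p,q)\cong\mathrm{SO}(q,p)$, so I may assume $p\geq q$; combined with $p,q\geq1$ and $p+q>2$ this forces $p\geq2$. I then select a coordinate subspace $W\subset\mathbb{R}^{p+q}$ of dimension three spanned by two positive and one negative direction, so that the restriction of the form to $W$ has signature $(2,1)$. Extending the isometries of $(W,Q|_{W})$ by the identity on $W^{\perp}$ yields a subgroup of $\mathrm{SO}(p,q)$ isomorphic to $\mathrm{SO}(2,1)$; it is closed because it is cut out by the polynomial conditions of fixing $W^{\perp}$ pointwise and preserving $W$.

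Inside this $\mathrm{SO}(2,1)$ I would exhibit $H$ explicitly. Writing the form in light-cone coordinates as $Q(x)=2x_{1}x_{3}+x_{2}^{2}$, a direct check shows that the diagonal elements $g_{a}=\mathrm{diag}(a,1,a^{-1})$ with $a>0$, together with the unipotent elements $n_{b}$ acting by $x_{1}\mapsto x_{1}+bx_{2}-\tfrac{b^{2}}{2}x_{3}$, $x_{2}\mapsto x_{2}-bx_{3}$, $x_{3}\mapsto x_{3}$, both preserve $Q$, and that $g_{a}n_{b}g_{a}^{-1}=n_{ab}$. Hence $H=\{g_{a}n_{b}:a>0,\ b\in\mathbb{R}\}$ is a subgroup isomorphic to $\mathbb{R}\rtimes\mathbb{R}^{+}$. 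That $H$ is closed follows by inspecting the matrix entries of $g_{a}n_{b}$: along any convergent sequence in $H$ the $(1,1)$- and $(3,3)$-entries force $a$ to a strictly positive limit, while the $(2,3)$-entry forces $b$ to converge, so the limit again lies in $H$.

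The substantive steps are therefore only the closedness verification of $H$ and the bookkeeping guaranteeing that a signature-$(2,1)$ block is always available (handled by the $\mathrm{SO}(p,q)\cong\mathrm{SO}(q,p)$ reduction). I do not expect a genuine obstacle here: the real analytic work — the discretization of continuous frames in Theorem \ref{thm:discretize} and the subgroup transfer in Theorem \ref{thm:subgt_ft} — has already been carried out, so the proof is essentially identical in structure to the $SL(2,\mathbb{R})$ corollary, with the explicit $ax+b$ embedding replacing the upper-triangular subgroup used there.
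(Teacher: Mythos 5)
Your proposal is correct and follows essentially the same route as the paper: both arguments exhibit a closed subgroup of $\mathrm{SO}(p,q)$ isomorphic to the $ax+b$ group and then conclude via Theorem \ref{thm:typeI_unimod}, Remark \ref{rem:inf_multiplicity}, and Theorem \ref{thm:subgt_ft}. The only difference is in the bookkeeping of the embedding: you reduce to a signature-$(2,1)$ coordinate block and write the subgroup at the group level in light-cone coordinates (with an explicit closedness check), whereas the paper directly exhibits Lie algebra elements of $\mathfrak{so}(p,q)$ satisfying $[A,X]=X$ and takes the subgroup $\exp(\mathbb{R}X)\exp(\mathbb{R}A)$.
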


\begin{proof}
We first observe that every element $X$ of the Lie algebra of $\mathrm{SO}%
\left(  p,q\right)  $ can be written in block-form as
\[
X=\left[
\begin{array}
[c]{cc}%
Z & S\\
S^{tr} & Y
\end{array}
\right]  \text{ for some }\left(  Z,Y\right)  \in\mathfrak{so}\left(
p\right)  \times\mathfrak{so}\left(  q\right)  .
\]
As such, if $q=1$ then%
\[
A=\left[
\begin{array}
[c]{cccccc}%
0 & 0 & \cdots & 0 & 0 & 1\\
0 & 0 & \cdots & 0 & 0 & 0\\
\vdots & \vdots & \ddots & \cdots & \vdots & \vdots\\
0 & 0 & \cdots & \ddots & 0 & 0\\
0 & 0 & \cdots & 0 & 0 & 0\\
1 & 0 & \cdots & 0 & 0 & 0
\end{array}
\right]  ,X=\left[
\begin{array}
[c]{cccccc}%
0 & 0 & \cdots & 0 & 1 & 0\\
0 & 0 & \cdots & 0 & 0 & 0\\
\vdots & \vdots & \ddots & \cdots & \vdots & \vdots\\
0 & 0 & \cdots & \ddots & 0 & 0\\
-1 & 0 & \cdots & 0 & 0 & 1\\
0 & 0 & \cdots & 0 & 1 & 0
\end{array}
\right]  \in\mathfrak{so}\left(  p,1\right)
\]
and $\left[  A,X\right]  =X.$ Thus, $\exp\left(  \mathbb{R}X\right)
\exp\left(  \mathbb{R}A\right)  $ is a closed nonunimodular subgroup of
$\mathrm{SO}\left(  p,1\right)  $ which is isomorphic to the $ax+b$-group and
its follows that $\mathrm{SO}\left(  p,1\right)  $ is FT. More generally, letting
\[
I=\left\{  \left(  1,p+1\right)  ,\left(  p+1,1\right)  \right\}  \text{ and
}J=\left\{  \left(  1,p\right)  ,\left(  p,p+1\right)  ,\left(  p+1,p\right)
\right\}
\]
and defining matrices $B,Y\in\mathfrak{gl}\left(  p+q,\mathbb{R}\right)$ with
entries satisfying
\[
B_{jk}=\left\{
\begin{array}
[c]{c}%
1\text{ if }\left(  j,k\right)  \in I\\
0\text{ otherwise}%
\end{array}
\right.  \text{ and }Y_{jk}=\left\{
\begin{array}
[c]{c}%
1\text{ if }\left(  j,k\right)  \in J\text{ }\\
-1\text{ if }\left(  j,k\right)  =\left(  p,1\right)  \\
0\text{ otherwise}%
\end{array}
\right.
\]
it is easy to verify that $\left[  B,Y\right]  =Y.$ Thus, the $ax+b$-group
$\exp\left(  \mathbb{R}Y\right)  \exp\left(  \mathbb{R}B\right)$ is a closed
subgroup of $\mathrm{SO}\left(  p,q\right)  $ and it follows immediately that
$\mathrm{SO}\left(  p,q\right)  $ is FT as well.
\end{proof}

\begin{remark}
Since $SL(n,\mathbb{R})$ and $SL(n,\mathbb{C})$, for $n\geq2$, contain closed
isomorphic copies of $SL(2,\mathbb{R})$, all of these groups are FT groups
again. The same reasoning applies to the symplectic groups $Sp(2n,\mathbb{R})$
and $Sp(2n,\mathbb{C})$: One has $Sp(2,\mathbb{R})=SL(2,\mathbb{R})$, and the
higher-dimensional groups contain closed isomorphic copies of $Sp(2,\mathbb{R}%
)$. Identical reasoning entails the FT property for the metaplectic groups in
arbitrary dimensions.
\end{remark}

\begin{remark}
We can use previous results to construct a rather unusual shearlet frame.
Consider the matrix group
\[
H=\left\{  \pm\left[
\begin{array}
[c]{cc}%
a & b\\
0 & a^{1/2}%
\end{array}
\right]  :a>0,b\in\mathbb{R}\right\}  ~.
\]
Let $G=\mathbb{R}^{2}\rtimes H$. The natural affine action of $G$ on
$\mathbb{R}^{2}$ gives rise to the quasi-regular representation on
$L^{2}(\mathbb{R}^{2})$, which is known to be strongly square-integrable
\cite{DaKuStTe}. The generalized wavelet transform associated with this group
is the so-called \textbf{shearlet transform}. Shearlet frames are typically
constructed by choosing a lattice $\Gamma\subset\mathbb{R}^{2}$ and a discrete
subset $H_{d}\subset H$ and considering families of the kind
\[
(\pi(hx,h)\psi)_{x\in\Gamma,h\in H_{d}}~,
\]
for suitable well-chosen functions $\psi$; see \cite{BeTa} for an early source
using this type of construction. Note that $H$ is a closed subgroup of $G$
that is isomorphic to the $ax+b$-group. Hence combining Theorems
\ref{thm:universal_samp} and \ref{thm:subgt_ft}, we can now show that there
exists a frame of the type
\[
(\pi(0,h)\varphi)_{h\in H_{d}}~,
\]
i.e., \emph{using only dilations!} This example generalizes easily to shearlet
groups in arbitrary dimension.
\end{remark}


\section{Lie groups with FT}

Using the tools developed in the previous sections, we will now explore the FT
property on a class of Lie groups known as exponential Lie groups. To present
the findings of this section, we need the following. Let $G$ be a Lie group
with Lie algebra $\mathfrak{g}.$

We say that $\mathfrak{g}$ is of \textbf{type R} if for every $X\in
\mathfrak{g,}$ the eigenvalues of the endomorphism $ad_{\mathfrak{g}}\left(
X\right)  $ are purely imaginary.

The \textbf{lower central series} of $\mathfrak{g}$ is inductively defined as
follows $C^{1}\mathfrak{g}=\mathfrak{g}$ and $C^{j+1}\mathfrak{g}=\left[
\mathfrak{g},C^{j}\mathfrak{g}\right]  $ for $j>0.$ Moreover, a Lie algebra is
\textbf{nilpotent} if there exists a natural number $k$ such that
$C^{k}\mathfrak{g}$ is trivial. According to Engel's characterization, a Lie
algebra $\mathfrak{g}$ is nilpotent if and only if for every $X\in
\mathfrak{g,}$ the endomorphism $ad_{\mathfrak{g}}\left(  X\right)
:Y\mapsto\left[  X,Y\right]  $ is nilpotent (see \cite{Hilgert}, Theorem
$5.2.8.$) The \textbf{derived series} of $\mathfrak{g}$ is a decreasing
collection $D^{j}\mathfrak{g}$ of ideals in $\mathfrak{g}$ defined as
$D^{0}\mathfrak{g}=\mathfrak{g}$ and $D^{j+1}\mathfrak{g}=\left[
D^{j}\mathfrak{g},D^{j}\mathfrak{g}\right]  $ for $j\in\mathbb{N}.$

A Lie algebra $\mathfrak{g}$ is \textbf{solvable} if its derived series
reaches the trivial algebra in finitely many steps. Suppose for now that
$\mathfrak{g}$ is a real solvable Lie algebra of dimension $n$. Then
$ad_{\mathfrak{g}}\left(  \mathfrak{g}\right)  $ is a solvable algebra of
endomorphisms acting on $\mathfrak{g.}$ Next, let $\mathfrak{g}_{\mathbb{C}}$
be the complexification of $\mathfrak{g.}$ As a $\mathfrak{g}_{\mathbb{C}}%
$-module, the Lie algebra $\mathfrak{g}_{\mathbb{C}}$ has a
\textbf{Jordan-H\"{o}lder} sequence
\[
\left\{  0\right\}  =\mathfrak{g}_{\mathbb{C}}^{\left(  0\right)  }%
\subset\mathfrak{g}_{\mathbb{C}}^{\left(  1\right)  }\cdots\subset
\mathfrak{g}_{\mathbb{C}}^{\left(  n-1\right)  }\subset\mathfrak{g}%
_{\mathbb{C}}^{\left(  n\right)  }=\mathfrak{g}_{\mathbb{C}}.
\]
That is, each $\mathfrak{g}_{\mathbb{C}}^{\left(  k\right)  }$ is an ideal and
$\dim\mathfrak{g}_{\mathbb{C}}^{\left(  k\right)  }=k.$ Moreover, the action
of $\mathfrak{g}_{\mathbb{C}}$ on the vector space $\mathfrak{g}_{\mathbb{C}%
}^{\left(  k\right)  }/\mathfrak{g}_{\mathbb{C}}^{\left(  k-1\right)  }$ where
$1\leq k\leq n$ defines a linear form on $\mathfrak{g}_{\mathbb{C}}$ called a
\textbf{root} of $\mathfrak{g.}$ We say that $\mathfrak{g}$ is an
\textbf{exponential solvable Lie algebra} if it does not possess any root with
nonzero purely imaginary value. In other words, each root of $\mathfrak{g}$ is
given by $X\mapsto\lambda\left(  X\right)  \left(  1+i\alpha\right)  $ where
$\lambda\in\mathfrak{g}^{\ast}$ and $\alpha\in\mathbb{R}.$

Next, let $G=\exp\mathfrak{g}$ be a simply connected and connected Lie group
with solvable Lie algebra $\mathfrak{g}.$ Then it is known that $\mathfrak{g}$
is exponential if and only if the exponential map determines an analytic
diffeomorphism between $\mathfrak{g}$ and $G.$ In this case, $G$ is called an
\textbf{exponential solvable Lie group}.

The following theorem summarizes our results

\begin{theorem}
\label{main} Exponential solvable Lie groups which are not nilpotent are FT.

\end{theorem}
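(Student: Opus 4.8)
The plan is to reduce the statement to the two sufficient criteria already established, namely Theorem \ref{thm:typeI_unimod} (type I and non-unimodular groups are FT) and Theorem \ref{thm:subgt_ft} (a group with a closed FT subgroup whose regular representation has infinite multiplicity is itself FT). I would begin by recalling the standard fact that every exponential solvable Lie group is type I. Combined with Remark \ref{rem:inf_multiplicity}, this means that any non-unimodular exponential solvable group is automatically FT and has a regular representation of infinite multiplicity. Hence, if $G$ itself is non-unimodular, we are done immediately by Theorem \ref{thm:typeI_unimod}, and the only genuine work lies in the unimodular, non-nilpotent case. For that case the strategy is to exhibit inside $G$ a closed non-unimodular (exponential solvable) subgroup $H$: then $H$ is FT and $\lambda_{H}$ has infinite multiplicity by the preceding remarks, and Theorem \ref{thm:subgt_ft} upgrades this to the FT property for $G$.

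The construction of the subalgebra underlying $H$ is purely Lie-theoretic. Since $G$ is not nilpotent, Engel's characterization provides an element $B\in\mathfrak{g}$ for which $\ad(B)$ has a nonzero eigenvalue; because $\mathfrak{g}$ is exponential, every nonzero root has the form $\lambda(1+i\alpha)$ with $\lambda\neq 0$, so this eigenvalue has nonzero real part, and after replacing $B$ by $-B$ if necessary we may assume its real part is positive. Decomposing the complexification $\mathfrak{g}_{\mathbb{C}}=\bigoplus_{\beta}\mathfrak{g}_{\beta}$ into generalized eigenspaces of $\ad(B)$, I would set $\mathfrak{m}_{\mathbb{C}}=\bigoplus_{\operatorname{Re}\beta>0}\mathfrak{g}_{\beta}$. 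Because $\ad(B)$ is a derivation one has $[\mathfrak{g}_{\beta},\mathfrak{g}_{\beta'}]\subseteq\mathfrak{g}_{\beta+\beta'}$, so $\mathfrak{m}_{\mathbb{C}}$ is a subalgebra; since complex conjugation preserves the condition $\operatorname{Re}\beta>0$, it is defined over $\mathbb{R}$, yielding a nonzero, $\ad(B)$-invariant real subalgebra $\mathfrak{m}\subset\mathfrak{g}$. Then $\mathfrak{h}=\mathbb{R}B\oplus\mathfrak{m}$ is a subalgebra, and the trace of $\ad_{\mathfrak{h}}(B)$ equals $\sum_{\operatorname{Re}\beta>0}(\dim\mathfrak{g}_{\beta})\operatorname{Re}\beta>0$, so $H=\exp(\mathfrak{h})$ is non-unimodular.

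To finish, I would observe that $\mathfrak{h}$, being a subalgebra of an exponential solvable Lie algebra, is itself exponential solvable (the eigenvalues of $\ad_{\mathfrak{h}}(X)$ form a subset of those of $\ad_{\mathfrak{g}}(X)$, hence admit no nonzero purely imaginary value), so $H$ is an exponential solvable Lie group; and in a simply connected solvable Lie group every connected subgroup is closed, whence $H$ is a closed subgroup of $G$. By the opening observation $H$ is type I and non-unimodular, therefore FT with $\lambda_{H}$ of infinite multiplicity, and Theorem \ref{thm:subgt_ft} then yields that $G$ is FT.

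The step I expect to be the main obstacle is the clean construction of the non-unimodular subalgebra $\mathfrak{h}$, and in particular making rigorous that the exponential hypothesis forces the nonzero eigenvalues of $\ad(B)$ to have nonzero real part; this is exactly what guarantees both $\mathfrak{m}\neq 0$ and a strictly positive trace, and it is where the hypothesis \emph{exponential} (as opposed to merely solvable of type R) is used. A secondary point requiring care is the appeal to ``exponential solvable implies type I,'' which underpins every application of Theorem \ref{thm:typeI_unimod}; for this I would cite the standard representation theory of exponential solvable Lie groups.
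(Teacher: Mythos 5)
Your proposal is correct, and its overall reduction coincides with the paper's: exhibit a closed, type I, non-unimodular subgroup $H<G$, observe that $H$ is then FT with $\lambda_{H}$ of infinite multiplicity (Theorem \ref{thm:typeI_unimod} together with Remark \ref{rem:inf_multiplicity}), and conclude via Theorem \ref{thm:subgt_ft}; this is exactly the content of Lemma \ref{solv1}. Where you genuinely differ is in the Lie-theoretic construction of $H$. The paper (Lemma \ref{eigenLemma}) works with a single eigenvector, or a conjugate pair of eigenvectors, of a non-nilpotent $\ad(X)$, splits into the cases of a real versus a genuinely complex eigenvalue --- the latter needing a Jacobi-identity argument to force $[Y_{1},Y_{2}]=0$ --- and so produces a subalgebra isomorphic either to the $ax+b$ algebra or to Gr\'elaud's algebra. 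You instead take $\mathfrak{h}=\mathbb{R}B\oplus\mathfrak{m}$, where $\mathfrak{m}$ is the real form of the sum of all generalized eigenspaces of $\ad(B)$ with eigenvalue of positive real part; the grading property $[\mathfrak{g}_{\beta},\mathfrak{g}_{\beta'}]\subseteq\mathfrak{g}_{\beta+\beta'}$ makes $\mathfrak{h}$ a subalgebra with no case analysis, and non-unimodularity of $H=\exp\mathfrak{h}$ falls out of the strictly positive trace. The trade-off: your route is cleaner and case-free, but it loses the refinement the paper records immediately after Theorem \ref{main} --- with the $ax+b$/Gr\'elaud subalgebras, the shift set $\Gamma$ can be taken inside a closed subgroup of dimension at most $3$ regardless of $\dim G$, whereas your $\mathfrak{h}$ may have dimension comparable to $\dim G$ (one could of course shrink $\mathfrak{m}$ to a single eigenline or eigenplane, which essentially recovers the paper's lemma). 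The auxiliary facts you invoke are the same ones the paper uses: exponential solvable Lie groups are type I, subalgebras of exponential solvable algebras are exponential solvable, and the associated analytic subgroups are closed --- for this last point you appeal to the Mal'cev--Hochschild theorem for simply connected solvable groups, where citing the standard fact for exponential groups, as the paper does in the proof of Lemma \ref{solv1}, would already suffice.
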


It is worthwhile noting that for all FT groups mentioned in Theorem
\ref{main}, the set $\Gamma\subset G$ of shifts generating the frame can be
chosen as a subset of a closed subgroup of dimension at most $3$,
\emph{regardless of the dimension of $G$ itself.}

The case of nonabelian simply connected, connected nilpotent Lie groups is
currently open. There are several reasons why this class is interesting: A
comprehensive answer would ultimately settle the exponential solvable case.
Moreover, it would help clarify the relationship between the FT property and
the property that inverses of relatively separated sets are relatively
separated again. Recall that the latter implies the negation of the former,
and we currently have no example that the converse does not hold in general.
If such a general converse holds, it implies that all nonabelian simply
connected, connected nilpotent Lie groups are not FT, via Lemma
\ref{lem:nilp_inv_relsep}. However, with current knowledge, all we can do is
to reduce the discussion to certain test cases. For the following proposition,
we introduce $T(n,\mathbb{R})$ for the group of upper triangular matrices in
$\mathrm{GL}(n,\mathbb{R})$ with ones on the diagonal.

\begin{proposition}
\text{ }

\begin{enumerate}
\item[(a)] Every nonabelian simply connected, connected nilpotent Lie group is
FT if and only if the Heisenberg group is FT

\item[(b)] No nonabelian simply connected, connected nilpotent Lie group is FT
if and only if for all $n\geq3$, $T(n,\mathbb{R})$ is not FT.
\end{enumerate}
\end{proposition}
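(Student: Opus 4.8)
The plan is to prove both biconditionals by combining Theorem~\ref{thm:subgt_ft} with the observation, recorded in Remark~\ref{rem:inf_multiplicity}, that every nonabelian connected nilpotent Lie group $H$ has $\lambda_{H}$ of infinite multiplicity. In each part one implication is essentially tautological, while the other rests on realizing one nilpotent group as a closed subgroup of another and then transporting the FT property along the inclusion.

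For part (a), the forward implication is immediate: the Heisenberg group is itself a nonabelian simply connected connected nilpotent Lie group, so if all such groups are FT then so is it. For the converse, assume the Heisenberg group $\mathbb{H}$ is FT and let $G$ be an arbitrary nonabelian simply connected connected nilpotent Lie group. If $\dim G=3$ then $G\cong\mathbb{H}$ and there is nothing to prove; otherwise $\dim G>3$, and by Kirillov's lemma (exactly as invoked in Lemma~\ref{lem:nilp_inv_relsep}) $G$ contains a closed subgroup $H\cong\mathbb{H}$. Since $\mathbb{H}$ is nonabelian nilpotent, $\lambda_{H}$ has infinite multiplicity by Remark~\ref{rem:inf_multiplicity}, and $H$ is FT by hypothesis, so Theorem~\ref{thm:subgt_ft} yields that $G$ is FT.

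For part (b), the forward implication is again trivial, since for each $n\geq 3$ the group $T(n,\mathbb{R})$ is a nonabelian simply connected connected nilpotent Lie group, hence covered by the hypothesis. For the converse I would argue by contradiction: suppose $T(n,\mathbb{R})$ fails to be FT for every $n\geq 3$, yet some nonabelian simply connected connected nilpotent Lie group $G$ is FT. By Ado's theorem together with Engel's theorem, the Lie algebra $\mathfrak{g}$ admits a faithful representation by strictly upper triangular matrices, i.e.\ an embedding $\mathfrak{g}\hookrightarrow\mathfrak{n}_{n}$ into the Lie algebra of $T(n,\mathbb{R})$ for some $n$; nonabelianness of $\mathfrak{g}$ forces $n\geq 3$. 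Exponentiating, and using that the connected subgroup attached to a subalgebra of a simply connected nilpotent group is closed, one obtains a closed subgroup $H<T(n,\mathbb{R})$ with $H\cong G$. As $G$ is nonabelian nilpotent, $\lambda_{H}$ has infinite multiplicity by Remark~\ref{rem:inf_multiplicity}, and $H$ is FT by assumption, so Theorem~\ref{thm:subgt_ft} forces $T(n,\mathbb{R})$ to be FT, contradicting the hypothesis.

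The only nonroutine ingredients are the two structural embeddings: a Heisenberg subalgebra inside every nonabelian nilpotent Lie algebra for part (a), and a faithful strictly-upper-triangular realization for part (b). Both are standard, and the point that genuinely requires care is that the associated connected subgroups are \emph{closed}; this holds automatically for subalgebras of simply connected nilpotent Lie groups, where the exponential map is a global diffeomorphism. Once closedness is secured, the infinite-multiplicity hypothesis of Theorem~\ref{thm:subgt_ft} is furnished for free by Remark~\ref{rem:inf_multiplicity}, and the argument reduces to bookkeeping with subgroup inclusions.
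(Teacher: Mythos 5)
Your proof is correct and follows essentially the same route as the paper: both parts reduce to Theorem \ref{thm:subgt_ft} applied to the closed embeddings $H\subset G\subset T(n,\mathbb{R})$ (the Heisenberg subgroup from Kirillov's lemma, the upper-triangular realization from Engel's theorem), with the infinite multiplicity of $\lambda_{H}$ supplied by Remark \ref{rem:inf_multiplicity}. If anything, your explicit appeal to Ado's theorem to obtain a \emph{faithful} strictly upper-triangular realization is slightly more careful than the paper's attribution of that embedding to Engel's theorem alone, since the adjoint representation by itself is not faithful.
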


\begin{proof}
Both statements are consequences of Theorem \ref{thm:subgt_ft}, via the
observation that if $G$ is nonabelian and simply connected, nilpotent, there
exist embeddings
\[
H\subset G\subset T(n,\mathbb{R})
\]
as closed subgroups, where $H$ is isomorphic to a Heisenberg group, and $n$ is
sufficiently large. Here the first embedding is due to Kirillov's lemma
\cite{Corwin}, the second is a well-known consequence of Engel's Theorem.
\end{proof}


There are two solvable Lie algebras which are of crucial importance in this
work. The first one is the \textbf{ax+b} Lie algebra which is a
two-dimensional solvable Lie algebra spanned by $A,X$ such that $\left[
A,X\right]  =X.$ The second one: the \textbf{Gr\'{e}laud's algebra} is a
three-dimensional solvable Lie algebra spanned by $A,Y_{1},Y_{2}$ with
non-trivial Lie brackets
\[
\left[  A,Y_{1}\right]  =Y_{1}+\beta Y_{2}\text{ and }\left[  A,Y_{2}\right]
=-\beta Y_{1}+Y_{2}%
\]
for some nonzero real number $\beta.$

\begin{lemma}
\label{eigenLemma} If $\mathfrak{g}$ is not of type R then $\mathfrak{g}$
admits a Lie subalgebra $\mathfrak{h<g}$ which is either isomorphic to the
ax+b algebra or the Gr\'{e}laud's algebra.
\end{lemma}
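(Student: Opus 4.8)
The plan is to read off the required subalgebra directly from an eigenvector of $\ad_{\mathfrak{g}}(X)$ belonging to an eigenvalue of nonzero real part. Since $\mathfrak{g}$ fails to be of type R, I can fix $X\in\mathfrak{g}$ for which $\ad_{\mathfrak{g}}(X)$ has an eigenvalue $\lambda=a+ib$ with $a\neq 0$, and the two model algebras will arise according to whether $\lambda$ is real or genuinely complex. If $b=0$, there is a nonzero real eigenvector $Y$ with $[X,Y]=aY$; setting $A=X/a$ gives $[A,Y]=Y$, and since $[A,Y]=Y\neq 0$ forces $A$ and $Y$ to be independent, $\mathrm{span}\{A,Y\}$ is a copy of the ax+b algebra.

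The substantive case is $b\neq 0$. First I would pass to the complexification $\mathfrak{g}_{\mathbb{C}}$ and choose an eigenvector $Z=Y_1+iY_2$ for $\lambda$, with $Y_1,Y_2\in\mathfrak{g}$; comparing real and imaginary parts of $\ad_{\mathfrak{g}_{\mathbb{C}}}(X)Z=\lambda Z$ gives $[X,Y_1]=aY_1-bY_2$ and $[X,Y_2]=bY_1+aY_2$. The crux of the argument is to feed the bracket $W:=[Y_1,Y_2]$ through the Jacobi identity:
\[
[X,W]=[[X,Y_1],Y_2]+[Y_1,[X,Y_2]]=a[Y_1,Y_2]+a[Y_1,Y_2]=2aW .
\]
Thus $W$ is always an eigenvector of $\ad_{\mathfrak{g}}(X)$ for the real eigenvalue $2a$, and the whole proof hinges on whether $W$ vanishes.

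If $W\neq 0$, then $W$ is a genuine real eigenvector with real nonzero eigenvalue $2a$, so $\mathrm{span}\{X/(2a),W\}$ is an ax+b algebra and we are back in the first case. If instead $W=0$, then with $A=X/a$ and $\beta=-b/a\neq 0$ the brackets become $[A,Y_1]=Y_1+\beta Y_2$, $[A,Y_2]=-\beta Y_1+Y_2$, and $[Y_1,Y_2]=0$, which is exactly Gr\'elaud's algebra. Here I still owe the linear independence of $A,Y_1,Y_2$: the vectors $Y_1,Y_2$ cannot be proportional, for otherwise $Z$ would be a complex multiple of a single real vector and $\lambda$ would be forced real; and $A$ cannot lie in $V:=\mathrm{span}\{Y_1,Y_2\}$, since $\ad(A)$ acts on $V$ with the nonzero eigenvalues $1\pm i\beta$ and is therefore invertible there, whereas $A\in V$ would give $\ad(A)A=[A,A]=0$ and hence $A=0$, contradicting $X=aA\neq 0$.

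The main obstacle is conceptual rather than computational: one must realize that the potentially troublesome bracket $[Y_1,Y_2]$ is precisely the object that both closes up the Gr\'elaud algebra (when it is zero) and otherwise manufactures a real eigenvalue producing the ax+b algebra. Once the identity $[X,[Y_1,Y_2]]=2a[Y_1,Y_2]$ is in hand, everything else is the routine verification that the proposed spans are bracket-closed --- immediate in the ax+b cases, since only a single nontrivial bracket occurs --- and that the generators are linearly independent.
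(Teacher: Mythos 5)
Your proof is correct, and its engine is the same as the paper's: the Jacobi-identity computation $[X,[Y_1,Y_2]]=2a\,[Y_1,Y_2]$. The only structural difference is in how the case distinction is organized. The paper splits globally: Case 1 assumes \emph{some} element of $\mathfrak{g}$ has a nonzero real eigenvalue (giving the ax+b algebra), and Case 2 assumes \emph{no} element does, so that the relation $[X,[Y_1,Y_2]]=2\alpha[Y_1,Y_2]$ together with $[Y_1,Y_2]\neq 0$ is a contradiction, forcing $[Y_1,Y_2]=0$ and yielding Gr\'elaud's algebra. You instead work locally with a single fixed $X$ and treat the branch $W=[Y_1,Y_2]\neq 0$ constructively: $W$ is then a genuine real eigenvector of $\mathrm{ad}(X)$ with nonzero real eigenvalue $2a$, so $\mathrm{span}\{X/(2a),W\}$ is already an ax+b algebra. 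The two organizations are logically equivalent and equally short; yours avoids the global dichotomy and the argument by contradiction, and it has the additional merit of spelling out the linear-independence verifications (that $Y_1,Y_2$ cannot be proportional when the eigenvalue is non-real, and that $A\notin\mathrm{span}\{Y_1,Y_2\}$ because $\mathrm{ad}(A)$ acts invertibly on that plane), which the paper leaves implicit.
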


\begin{proof}
By assumption, one of the following must hold. \newline Case $1$: There exists
$X\in\mathfrak{g}$ with a nonzero real eigenvalue $\lambda.$ As such, there
exists an eigenvector $Y$ for $ad\left(  X\right)  $ with corresponding
nonzero eigenvalue $\lambda$ satisfying $\left[  \lambda^{-1}X,Y\right]  =Y$
and $\mathfrak{h}=\mathbb{R}Y+\mathbb{R}X$ as desired. \newline Case $2$:
Suppose that Case $1$ does not hold. Then there exists $X\in\mathfrak{g}$ such
that $\alpha+i\beta$ is an eigenvalue for the endomorphism $ad(X)$ and
$\alpha\neq0.$ Thus, there exist $Y_{1},Y_{2}\in\mathfrak{g}$ such that
$\left[  X,Y_{1}\right]  =\alpha Y_{1}+\beta Y_{2}$ and $\left[
X,Y_{2}\right]  =-\beta Y_{1}+\alpha Y_{2}.$ Next, we claim that $Y_{1},Y_{2}$
must commute. Otherwise, a straightforward application of Jacobi's identity
yields $\left[  X,\left[  Y_{1},Y_{2}\right]  \right]  =2\alpha\left[
Y_{1},Y_{2}\right]  .$ However, this contradicts the fact that $X$ does not
have a nonzero real eigenvalue. Finally, setting $\mathfrak{h}=\mathbb{R}%
$-span$\left\{  X,Y_{1},Y_{2}\right\}  $ gives the desired result.
\end{proof}

\begin{lemma}
\label{solv1}Let $\mathfrak{g}$ be an $n$-dimensional exponential solvable Lie
algebra. Then the following statements are equivalent

\begin{enumerate}
\item $\mathfrak{g}$ is not a nilpotent Lie algebra

\item There exists a Lie subalgebra of $\mathfrak{h}$ of $\mathfrak{g}$ such
that $\exp\mathfrak{h}$ is a closed type I non-unimodular Lie group
\end{enumerate}
\end{lemma}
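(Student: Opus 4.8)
The plan is to treat the two implications separately; $(2)\Rightarrow(1)$ is immediate and $(1)\Rightarrow(2)$ carries the content. For $(2)\Rightarrow(1)$, suppose $\mathfrak{g}$ were nilpotent. Then every subalgebra, in particular the $\mathfrak{h}$ provided by (2), is nilpotent, so $\exp\mathfrak{h}$ is a connected nilpotent Lie group and hence unimodular (the trace of each $\ad$ vanishes). This contradicts non-unimodularity, so (2) forces $\mathfrak{g}$ to be non-nilpotent.

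For $(1)\Rightarrow(2)$, the first step is to use the exponential hypothesis to pass from ``non-nilpotent'' to ``not of type R''. If $\mathfrak{g}$ is non-nilpotent, Engel's theorem supplies $X\in\mathfrak{g}$ such that $\ad(X)$ has a nonzero eigenvalue $\mu$. Since $\mathfrak{g}$ is exponential, $\mu$ is a value of a root and therefore has the form $\mu=\lambda(X)(1+i\alpha)$ with $\lambda\in\mathfrak{g}^{*}$ and $\alpha\in\mathbb{R}$; as $\mu\neq 0$ we have $\lambda(X)\neq0$, so $\mu$ has nonzero real part and $\mathfrak{g}$ is not of type R. Lemma \ref{eigenLemma} then yields a subalgebra $\mathfrak{h}<\mathfrak{g}$ isomorphic to the ax+b algebra or to Gr\'elaud's algebra.

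The second step is to see that $H:=\exp\mathfrak{h}$ is a closed subgroup isomorphic, as a Lie group, to the corresponding model group. Here the exponential hypothesis on $G$ does the essential work: $\exp_{G}:\mathfrak{g}\to G$ is a diffeomorphism, so the image of the linear (hence closed) subspace $\mathfrak{h}$ is closed in $G$. Both model algebras are themselves exponential, so the connected subgroup with Lie algebra $\mathfrak{h}$ coincides under the canonical immersion with $\exp_{G}(\mathfrak{h})$, and this immersion is a homeomorphism onto its closed image; thus $H$ is a closed, simply connected subgroup isomorphic to the ax+b group or to Gr\'elaud's group.

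It remains to check that $H$ is type I and non-unimodular. Non-unimodularity is a one-line trace computation: in the ax+b algebra $\Tr\ad(A)=1$, while in Gr\'elaud's algebra $\ad(A)$ has eigenvalues $0,1\pm i\beta$, so $\Tr\ad(A)=2$; in either case some $\ad$ has nonzero trace, so $H$ is non-unimodular. Type I holds because exponential solvable Lie groups are type I. The main obstacle I anticipate is the closedness statement: one must be sure the abstract connected subgroup attached to $\mathfrak{h}$ is genuinely embedded rather than merely immersed, which is exactly what the diffeomorphism property of $\exp_{G}$ guarantees. A secondary point requiring care is the type I property of Gr\'elaud's group, which, if not simply quoted, amounts to verifying that its coadjoint (equivalently, Mackey dual) orbits are locally closed.
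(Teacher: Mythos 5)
Your proof is correct and takes essentially the same route as the paper: the easy direction via ``subalgebras of nilpotent algebras are nilpotent, and connected nilpotent groups are unimodular,'' and the substantive direction via Engel's theorem, the exponential (no purely imaginary roots) hypothesis, and Lemma \ref{eigenLemma} to produce an ax+b or Gr\'elaud subalgebra whose associated subgroup is closed, simply connected, type I and non-unimodular. You in fact supply details the paper leaves implicit, namely the explicit passage from non-nilpotency to ``not of type R'' through the root condition, the closedness argument for $\exp_{G}(\mathfrak{h})$, and the trace computation certifying non-unimodularity.
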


\begin{proof}
To prove that $(2)\Longrightarrow(1),$ suppose that there exists a Lie
subalgebra of $\mathfrak{h}$ of $\mathfrak{g}$ such that $\exp\mathfrak{h}$ is
a type I non-unimodular Lie group. Since nilpotent Lie groups are unimodular,
this subalgebra cannot be nilpotent. Then $\mathfrak{g}$ is not a nilpotent
Lie algebra since every subalgebra of a nilpotent Lie algebra is nilpotent
(see \cite{Corwin}, Proposition $1.1.6$). Conversely, let us suppose that
$\mathfrak{g}$ is an exponential solvable Lie group which is not nilpotent. By
Engel's Theorem, there exists $X\in\mathfrak{g}$ such that $ad_{\mathfrak{g}%
}(X)$ is not nilpotent. The fact that $\mathfrak{g}$ has no purely imaginary
roots together with Lemma \ref{eigenLemma} imply that $G$ admits a subalgebra
isomorphic to the ax+b Lie algebra or Grelaud's algebra. Since $G$ is
exponential, the associated Lie subgroup is simply connected and closed, and
since it is exponential solvable, it is of type I. Therefore
$(1)\Longrightarrow(2)$ holds.
\end{proof}

\subsection*{Proof of Theorem \ref{main}}

The proof of Theorem \ref{main} is a direct consequence of Lemma \ref{solv1}
and Theorem \ref{thm:subgt_ft}.

\bibliographystyle{plain}
\bibliography{frtr}

\begin{thebibliography}{10}

\bibitem{BeTa}
David Bernier and Keith~F. Taylor.
\newblock Wavelets from square-integrable representations.
\newblock {\em SIAM J. Math. Anal.}, 27(2):594--608, 1996.

\bibitem{MR1946982}
Ole Christensen.
\newblock {\em An introduction to frames and {R}iesz bases}.
\newblock Applied and Numerical Harmonic Analysis. Birkh\"auser Boston, Inc.,
  Boston, MA, 2003.

\bibitem{ChDeHe}
Ole Christensen, Baiqiao Deng, and Christopher Heil.
\newblock Density of {G}abor frames.
\newblock {\em Appl. Comput. Harmon. Anal.}, 7(3):292--304, 1999.

\bibitem{Corwin}
Laurence Corwin and Frederick~P Greenleaf.
\newblock {\em Representations of Nilpotent Lie Groups and Their Applications:
  Volume 1, Part 1, Basic Theory and Examples}, volume~18.
\newblock Cambridge university press, 1990.

\bibitem{DaKuStTe}
Stephan Dahlke, Gitta Kutyniok, Peter Maass, Chen Sagiv, Hans-Georg Stark, and
  Gerd Teschke.
\newblock The uncertainty principle associated with the continuous shearlet
  transform.
\newblock {\em Int. J. Wavelets Multiresolut. Inf. Process.}, 6(2):157--181,
  2008.

\bibitem{DuMo}
M.~Duflo and Calvin~C. Moore.
\newblock On the regular representation of a nonunimodular locally compact
  group.
\newblock {\em J. Functional Analysis}, 21(2):209--243, 1976.

\bibitem{MR809337}
Hans~G. Feichtinger and Peter Gr\"obner.
\newblock Banach spaces of distributions defined by decomposition methods. {I}.
\newblock {\em Math. Nachr.}, 123:97--120, 1985.

\bibitem{MR1021139}
Hans~G. Feichtinger and K.~H. Gr\"ochenig.
\newblock Banach spaces related to integrable group representations and their
  atomic decompositions. {I}.
\newblock {\em J. Funct. Anal.}, 86(2):307--340, 1989.

\bibitem{MR1397028}
Gerald~B. Folland.
\newblock {\em A course in abstract harmonic analysis}.
\newblock Studies in Advanced Mathematics. CRC Press, Boca Raton, FL, 1995.

\bibitem{FS}
Daniel Freeman and Darren Speegle.
\newblock The discretization problem for continuous frames.
\newblock {\em arXiv preprint arXiv:1611.06469}, 2016.

\bibitem{Fu_02}
Hartmut F{\"u}hr.
\newblock Admissible vectors for the regular representation.
\newblock {\em Proc. Amer. Math. Soc.}, 130(10):2959--2970 (electronic), 2002.

\bibitem{Fu_LN}
Hartmut F{\"u}hr.
\newblock {\em Abstract harmonic analysis of continuous wavelet transforms},
  volume 1863 of {\em Lecture Notes in Mathematics}.
\newblock Springer-Verlag, Berlin, 2005.

\bibitem{Hilgert}
Joachim Hilgert and Karl-Hermann Neeb.
\newblock {\em Structure and geometry of Lie groups}.
\newblock Springer Science annd Business Media, 2011.

\bibitem{Iverson}
Joseph~W. Iverson.
\newblock Subspaces of {$L^2(G)$} invariant under translation by an abelian
  subgroup.
\newblock {\em J. Funct. Anal.}, 269(3):865--913, 2015.

\bibitem{iwasawa1951topological}
Kenkichi Iwasawa.
\newblock Topological groups with invariant compact neighborhoods of the
  identity.
\newblock {\em Annals of Mathematics}, pages 345--348, 1951.

\end{thebibliography}

\end{document}